\newcommand{\be}{\begin{equation}}
\newcommand{\ee}{\end{equation}}
\newcommand{\beq}{\begin{eqnarray}}
\newcommand{\eeq}{\end{eqnarray}}
\newtheorem{thm}{Theorem}[section]
\newtheorem{lma}{Lemma}[section]
\newtheorem{prop}{Proposition}[section]
\newtheorem{cor}{Corollary}[section]
\newtheorem{claim}{Claim}[section]
\theoremstyle {definition}
\newtheorem{defn}{Definition}[section]
\theoremstyle{remark}
\newtheorem{rem}{Remark}[section]
\numberwithin{equation}{section}
\def\tr{\operatorname{tr}}
\def\be{\begin{equation}}
\def\ee{\end{equation}}
\def\bee{\begin{equation*}}
\def\eee{\end{equation*}}
\newcommand{\de}{\partial}
\newcommand{\Ric}{\mathrm{Ric}}
\newcommand{\vp}{\varphi}
\def\supp{\mathrm{supp}}
\def\Ric{\text{\rm Ric}}
\def\tr{\mathrm{tr}}
\def\W{\mathcal{W}}
\def\Div{\mathrm{div}}
\def\ve{\varepsilon}
\def\Hess{\mathrm{Hess}}
\def\id{\mathrm{id}}
\begin{document}

\title[Positive weighted mass theorem]{A non-spin method to the positive weighted mass theorem for weighted manifolds}

\author[J. Chu]{Jianchun Chu$^1$}
\address[Jianchun Chu]{School of Mathematical Sciences, Peking University, Yiheyuan Road 5, Beijing, P.R.China, 100871}
\email{jianchunchu@math.pku.edu.cn}

\author[J. Zhu]{Jintian Zhu$^2$}
\address[Jintian Zhu]{Beijing International Center for Mathematical Research, Peking University, Yiheyuan Road 5, Beijing, P.R.China, 100871}
\email{zhujt@pku.edu.cn, zhujintian@bicmr.pku.edu.cn}

\thanks{$^1$Research partially supported by the Fundamental Research Funds for the Central Universities, Peking University.}

\thanks{$^2$Research partially supported by China postdoctoral science foundation (grant no. BX2021013).}

\subjclass[2020]{Primary: 53C20; Secondary: 53C21.}

\date{\today}

\begin{abstract}
In this paper, we investigate the weighted mass for weighted manifolds. By establishing a version of density theorem and generalizing Geroch conjecture in the setting of $P$-scalar curvature, we are able to prove the positive weighted mass theorem for weighted manifolds, which generalizes the result of Baldauf-Ozuch \cite{BaldaufOzuch2022} to non-spin manifolds.
\end{abstract}

\keywords{Positive weighted mass theorem, $P$-scalar curvature}

\maketitle

\section{Introduction}

The famous positive mass theorem was proved by Schoen-Yau \cite{SchoenYau1979a,SchoenYau1981,SchoenYau2017}, which asserts that the Arnowitt-Deser-Misner (ADM) mass of each end of an asymptotically flat (AF) $n$-manifold with non-negative scalar curvature must be non-negative and the mass vanishes exactly when it is isometric to the standard Euclidean space. Under additional spin assumption, the positive mass theorem was also proved by Witten \cite{Witten1981} using spinor method.

The notion of weighted manifold was first introduced by Lichnerowicz \cite{Lichnerowicz1970,Lichnerowicz197172}. A weighted manifold (also called manifold with density) means a smooth Riemannian manifold $(M,g)$ endowed with a weighted measure $e^{-f}\mathrm{dvol}_{g}$, where $f$ is a smooth function on $M$ and $\mathrm{dvol}_{g}$ is the volume element of $(M,g)$. In \cite{Perelman2002}, Perelman regarded the Ricci flow as a gradient flow of the following functional
\[
\begin{split}
\mathcal{F}(M,g,f)
&= \int_{M}\left(R(g)+|\mathrm{d}f|_{g}^{2}\right)e^{-f}\mathrm{dvol}_{g}\\
&= \int_{M}\left(R(g)+2\Delta_g f-|\mathrm d f|_g^2\right){e^{-f}}\mathrm{dvol}_{g},
\end{split}
\]
where $R(g)$ is denoted to be the scalar curvature of $(M,g)$. This leads to the notion of weighted scalar curvature (or $P$-scalar curvature):
\[
P(g,f) = R(g)+2\Delta_g f-|\mathrm d f|_g^2.
\]
Weighted manifolds with positive or non-negative $P$-scalar curvature have very similar properties compared to Riemannian manifolds with positive or non-negative scalar curvature. By the Gauss-Bonnet theorem and the divergence theorem, every oriented closed surface with positive $P$-scalar curvature must be a topological sphere. For the three-dimensional case, Schoen-Yau \cite{SchoenYau1979b} showed that if $(M^{3},g)$ is oriented and has positive scalar curvature, then it contains no closed, immersed stable minimal surfaces of positive genus. Fan \cite{Fan2008} generalized this result to the weighted case (see \cite{Ho2010} for subsequent work). For further works of the $P$-scalar curvature supporting the above philosophy, we refer the reader to \cite{AbedinCorvino2017,Deng2021,LiMantoulidis2021}.

Recently, there has been interest in establishing the positive weighted mass theorems for weighted manifolds. In \cite{BaldaufOzuch2022}, Baldauf-Ozuch proved the positive weighted mass theorem for spin AF weighted manifolds by generalizing Witten's spinor argument \cite{Witten1981} to the weighted case.
The goal of this paper is to deal with the non-spin case.

To state our main theorem, we have to introduce AF weighted manifolds of $(p,\tau)$-type first. For readers first getting touch with weighted spaces and weighted analysis, we recommend them to consult Appendix \ref{Sec: Appendix A} before going further.
Let
$$
p>n\mbox{ and }\tau\in\left(\frac{n-2}{2},n-2\right).
$$

\begin{defn}\label{Defn: AF}
A weighted manifold $(M^n,g,f)$ is said to be AF of $(p,\tau)$-type if $n\geq 3$ and there is a compact subset $K\subset M$ such that
\begin{itemize}\setlength{\itemsep}{1mm}
\item[(a)] the complement $M-K$ consists of finitely many ends $E_1, E_2,\ldots, E_N$ and each $E_k$ admits a diffeomorohism $\Phi_k:E_k\to \mathbb{R}^{n}\setminus B_{1}$;
\item[(b)] in coordinate chart $(E_k,\Phi_k)$, the components $g_{ij}-\delta_{ij}$ and the function $f$ belong to $W^{2,p}_{-\tau}(E_k)$;
\item[(c)] the scalar curvature $R(g)$ and the Laplaican $\Delta_g f$ belong to $L_{-2\tau-2}^{\infty}$;
\item[(d)] the $P$-scalar curvature $P(g,f)$ belongs to $L^1$;
\item[($\ast$)] when $n=3$, we further assume the technical condition that $g_{ij}-\delta_{ij}$ belongs to $W^{1,\infty}_{-1}(E_k)$ on each end $E_k$.
\end{itemize}
\end{defn}

Recall the following weighted mass for AF weighted manifolds from \cite{BaldaufOzuch2022}.

\begin{defn}
Given an AF weighted manifold $(M^n,g,f)$ of $(p,\tau)$-type, the weighted mass of an end $E$ is defined to be
\[
\begin{split}
m(M,g,f,E) = {} & 2(n-1)\omega_{n-1}\cdot m_{\mathrm{ADM}}(M,g,E)+2\lim_{\rho\to \infty} \int_{S_\rho} \partial_i f \cdot \nu^i \cdot e^{-f}\, \mathrm{d}\sigma,
\end{split}
\]
where $\omega_{n-1}$ is the volume of the unit sphere in $\mathbb{R}^{n}$, $S_\rho$ is the coordinate $\rho$-sphere $\{x\in \mathbb{R}^n: |x|=\rho\}$, $\nu$ and $\mathrm{d}\sigma$ are the outward unit normal and volume form of $S_\rho$ in $\mathbb R^n$.
\end{defn}

\begin{rem}
For later use, we point out an equivalent formula of the weighted mass:
\begin{equation}\label{equivalent formula weighted mass}
m(M,g,f,E) = \lim_{\rho\to \infty} \int_{S_\rho} (\partial_jg_{ij}-\partial_ig_{jj}+2\partial_if)\, \nu^i \, \mathrm{d}\sigma.
\end{equation}
Indeed, by $f\in W_{-\tau}^{2,p}$ and Theorem \ref{weighted inequalities} (iii), we have $f\in W_{-\tau}^{1,\infty}$ and so
\[
\int_{S_\rho}\partial_i f \cdot \nu^i \cdot (e^{-f}-1)\,\mathrm{d}\sigma = O(\rho^{n-1}\cdot \rho^{-\tau-1}\cdot \rho^{-\tau}) = O(\rho^{n-2-2\tau}).
\]
Since we have $\tau>\frac{n-2}{2}$, then
\[
\lim_{\rho\to \infty}\int_{S_\rho}\partial_i f \cdot \nu^i \cdot (e^{-f}-1)\,\mathrm{d}\sigma = 0,
\]
which implies \eqref{equivalent formula weighted mass}.
\end{rem}

Our main theorem is as follows.
\begin{thm}\label{Thm: main}
Let $3\leq n\leq 7$. If $(M^n,g,f)$ is an AF weighted manifold of $(p,\tau)$-type with non-negative $P$-scalar curvature, then for each end $E$ we have
\begin{equation}\label{Eq: mass inequality}
m(M,g,f,E)\geq 0.
\end{equation}
If the equality holds for some end $E$, then $(M,g)$ is isometric to the Euclidean $n$-space $(\mathbb R^n,g_{\mathbb{E}})$ and $f$ is the zero function.
\end{thm}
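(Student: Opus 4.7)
The strategy is to follow the Schoen--Yau minimal hypersurface approach to the positive mass theorem, adapted to the weighted setting. The two principal ingredients, both advertised in the abstract as results established earlier in the paper, are a density theorem for weighted AF data and a generalisation of the Geroch torus obstruction to $P$-scalar curvature. The dimension restriction $n\le 7$ is the standard one, required for regularity of area minimisers in the subsequent minimal-slicing step.

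To prove the inequality \eqref{Eq: mass inequality}, I would argue by contradiction and assume $m(M,g,f,E)<0$ for some end $E$. First I would invoke the density theorem to replace $(g,f)$ by a nearby AF weighted structure $(\widetilde g,\widetilde f)$ with strictly positive $P$-scalar curvature, harmonic asymptotics on each end, and still negative weighted mass on $E$. Next, because the weighted mass is negative, a barrier/Plateau argument on the end $E$ modelled on Schoen's construction (pairing a coordinate function with a minimising current) should produce a complete two-sided stable critical point $\Sigma^{n-1}\subset M$ of the weighted area $A_f(\Sigma)=\int_\Sigma e^{-f}\,d\mathcal H^{n-1}$ asymptotic to a coordinate hyperplane on $E$; the assumption $n\le 7$ ensures smoothness of $\Sigma$. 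The weighted stability inequality on $\Sigma$, combined with a weighted Gauss-type identity relating $P(g,f)|_\Sigma$ to the intrinsic $P$-scalar curvature of $(\Sigma,g|_\Sigma,f|_\Sigma)$ and to the $f$-second fundamental form, plus a conformal/weight deformation of Schoen--Yau type, should yield an AF weighted structure on $\Sigma$ of one lower dimension, still with non-negative $P$-scalar curvature and still with negative weighted mass on a distinguished end. Iterating this descent and then compactifying and surgering in the standard fashion produces a torus equipped with a weighted metric of positive $P$-scalar curvature, contradicting the weighted Geroch theorem proved in this paper.

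For the rigidity statement, assume $m(M,g,f,E)=0$. I would deform $(g,f)$ in a direction that strictly increases the weighted mass while preserving non-negativity of the $P$-scalar curvature; applying the inequality just proved to these deformations, together with a first variation identity, forces $R(g)=0$, $\Delta_g f=0$ and $|df|_g\equiv 0$. Hence $f$ is constant, and the asymptotic decay $f\in W^{2,p}_{-\tau}$ with $\tau>0$ forces $f\equiv 0$. The claim then reduces to the classical rigidity case of the Schoen--Yau positive mass theorem, which gives $(M,g)\cong(\mathbb R^n,g_{\mathbb E})$.

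The decisive new step is the dimension descent: one must identify the correct weighted second variation and weighted Gauss identity so that stability of a critical point of $A_f$ in $(M,g,f)$, together with non-negativity of the ambient $P$-scalar curvature, yields (after a suitable conformal and weight rescaling on $\Sigma$) a genuine weighted structure on $\Sigma$ with non-negative intrinsic $P$-scalar curvature and with the correct asymptotics to allow the mass to be read off. This is the point at which the weight $f$ interacts most delicately with the curvature identities and where the non-spin approach must substitute geometric arguments for Witten's spinorial identities. Once this weighted descent lemma is in hand, I expect the density reduction, the Plateau existence, the low-dimension regularity, and the terminal weighted Geroch obstruction to all slot in along lines parallel to the classical Schoen--Yau scheme.
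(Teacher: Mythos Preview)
Your overall architecture (density theorem, then a Geroch-type obstruction) matches the paper, but the route to the contradiction is different and your description conflates two distinct strategies. The paper does \emph{not} perform a Schoen--Yau dimension descent through weighted-area-minimising hypersurfaces asymptotic to coordinate hyperplanes. Instead, after invoking the density theorem to make the ends harmonically flat (with non-negative, not strictly positive, $P$-scalar curvature), it applies Lohkamp's compactification directly in dimension $n$: the negative-mass end is closed off by a torus, the remaining ends are truncated along $f$-mean-convex coordinate spheres, and one obtains a compact weighted manifold $\hat M=\mathbb T^n\#N$ with quasi-positive $P$-scalar curvature and $f$-mean-convex boundary. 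One then checks $\hat M$ is SYS and invokes the generalised Geroch theorem, whose proof (via the weighted stable slicing of Brendle--Hirsch--Johne) is where the minimal-hypersurface and stability arguments actually live. Your proposed descent step, by contrast, would require showing that the induced weighted structure on $\Sigma$ again has non-negative $P$-scalar curvature \emph{and} negative weighted mass, which is exactly the delicate point you flag but do not resolve; and your closing sentence about ``compactifying and surgering to produce a torus'' is precisely the Lohkamp manoeuvre that makes the descent unnecessary.

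Your rigidity argument has a genuine gap. A first-variation argument of the type you sketch yields $P(g,f)\equiv 0$, but it does not separate this into $R(g)=0$, $\Delta_g f=0$, and $|\mathrm df|_g=0$ individually; there is no reason the three contributions should vanish separately. The paper's device is to observe that the entire proof of the mass inequality goes through verbatim if $P(g,f)=R+2\Delta_g f-|\mathrm df|_g^2$ is replaced by the weaker quantity $P_n(g,f)=R+2\Delta_g f-\tfrac{n-2}{n-1}|\mathrm df|_g^2$, because the key pointwise inequality in the weighted-slicing argument only needs the coefficient of $|\mathrm df|^2$ to be at least $\tfrac{n-2}{n-1}$. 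Running the Step~1 conformal-deformation argument for $P_n$ then gives $P_n(g,f)\equiv 0$ as well, whence $|\mathrm df|_g^2=(n-1)\bigl(P_n(g,f)-P(g,f)\bigr)=0$, so $f\equiv 0$, and the classical rigidity finishes the proof.
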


\begin{rem}
When $(M^{n},g)$ is spin and the weighted mass equals to zero, Baldauf-Ozuch \cite{BaldaufOzuch2022} showed that $(M^{n},g)$ is isometric to $(\mathbb R^n,g_{\mathbb{E}})$ and
\[
\int_{\mathbb{R}^{n}}(\Delta_{\mathbb{E}}f-|\mathrm{d}f|_{g_{\mathbb{E}}}^{2})\,e^{-f}\mathrm{d}x = 0.
\]
Compared to this result, our rigidity for $f$ is slightly stronger.
\end{rem}

Now let us discuss the proof of Theorem \ref{Thm: main}. Basically, we follow the standard arguments from the proof of Riemannian positive mass theorem for (unweighted) AF manifolds. In detail, we first establish a density theorem that any AF weighted manifold can be modified to have harmonically flat ends without changing weighted mass dramatically. Second we apply the compactification trick of Lohkamp \cite{Lohkamp1999} to reduce the desired mass inequality to the existence of topological obstruction for positive $P$-scalar curvature on Schoen-Yau-Schick manifolds, which can be then verified by using weighted stable slicing technique in \cite{BrendleHirschJohne2022} originating from \cite{SchoenYau2017}. In our way to show the density theorem we have to deal with the conformal operator
$$T_{g,f}(u)=P(u^{\frac{4}{n-2}}g,f)=u^{-\frac{n+2}{n-2}}\left(-\frac{4(n-1)}{n-2}\Delta_g u +4\langle\mathrm{d}u,\mathrm{d}f\rangle_g+P(g,f)u\right),$$
where the extra gradient term makes it impossible to carry on the original arguments from \cite{SchoenYau1981} to find a solution of $P(u^{\frac{4}{n-2}}g,f)=0$ based on Fredholm alternative. Our idea to get rid of this difficulty is inspired by \cite{EHLS2016}. Namely we investigate the conformal operator coupled with the full deformation operator $S_{g,f}(h)=P(g+h,f)$. As a result, possibly non-trivial kernel of $DT_{g,f}$ causing the failure of Fredholm alternative is now compensated by the surjectivity of $DS_{g,f}$.

The organization of this paper is as follows. In Section \ref{Sec: Density} and \ref{Sec: Geroch}, we will establish the density theorem and prove the generalized Geroch conjecture. In Section \ref{Sec: Proof}, we will give the proof of Theorem \ref{Thm: main}. In Appendix \ref{Sec: Appendix A}, we will collect some basic notions and results of weighted analysis used in this paper.

\section{Density theorem}\label{Sec: Density}
For convenience, throughout this paper we denote
$$
s=\frac{4}{n-2}
$$
and call $(g,f)$ a {\it smooth pair} if $g$ and $f$ are a smooth metric and a smooth function on $M$ respectively.

The purpose of this section is to prove the following density theorem.
\begin{prop}\label{Density}
Let $(M,g,f)$ be an AF weighted manifold of $(p,\tau)$-type with non-negative $P$-scalar curvature. For any constant $\ve>0$, we can construct a new smooth pair $(g_\ve,f_\ve)$ such that
\begin{itemize}\setlength{\itemsep}{1mm}
\item $(M,g_\ve, f_\ve)$ is still an AF weighted manifold of $(p,\tau)$-type with non-negative $P$-scalar curvature;
\item for each end $E_k$, there is a constant $r_k$ such that
$$
g_\ve=u_{\ve,k}^s g_{\mathbb{E}}\mbox{ and }f_\ve\equiv 0\mbox{ when }|x|\geq r_k,
$$
where $u_{\ve,k}$ is a harmonic function with the expression
$$
u_{\ve,k}=1+\frac{A_{\ve,k}}{r^{n-2}}+O(r^{1-n});
$$
\item we have
$$
|m(M,g,f,E_k)-m(M,g_\ve,f_\ve,E_k)|<\ve.
$$
\end{itemize}
\end{prop}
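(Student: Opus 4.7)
My plan is to follow the strategy of the Schoen-Yau density argument for the Riemannian positive mass theorem, with the coupling trick of \cite{EHLS2016} to overcome the fact that in the weighted setting the conformal operator $T_{g,f}$ is non-self-adjoint. The construction proceeds in two stages: first a blunt cutoff producing a pair $(\tilde g_\rho, \tilde f_\rho)$ equal to $(g_{\mathbb{E}},0)$ outside a large ball on each end, and second a small nonlinear correction that restores non-negative $P$-scalar curvature and produces the required harmonically flat structure at infinity.

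For the cutoff stage, on each end $E_k$ I would take a smooth cutoff $\chi_\rho$ with $\chi_\rho=1$ on $B_\rho$ and $\chi_\rho=0$ outside $B_{2\rho}$, and set
\[
\tilde g_\rho = \chi_\rho g + (1-\chi_\rho) g_{\mathbb{E}}, \qquad \tilde f_\rho = \chi_\rho f
\]
in the asymptotic chart, leaving $(g,f)$ unchanged on $K$. Because $g-g_{\mathbb{E}}$ and $f$ lie in $W^{2,p}_{-\tau}$, Theorem~\ref{weighted inequalities} gives pointwise decay $O(r^{-\tau})$ for these tensors and $O(r^{-\tau-1})$ for their first derivatives, so the integrand in \eqref{equivalent formula weighted mass} for the difference lives in $L^{1}(S_\rho^{\infty})$ with total contribution tending to $0$ as $\rho\to\infty$; thus $|m(M,g,f,E_k)-m(M,\tilde g_\rho,\tilde f_\rho,E_k)|<\varepsilon/2$ for $\rho$ large. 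The cost is that $P(\tilde g_\rho,\tilde f_\rho)$ is now compactly supported but may change sign in the cutoff annulus.

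For the correction stage, rather than solving $T_{\tilde g_\rho,\tilde f_\rho}(u)=Q$ alone for a suitable non-negative, compactly supported $Q$ approximating the original $P$-scalar curvature (which fails because $DT_{g,f}$ contains the drift term $4\langle dv,df\rangle_g$ preventing a clean Fredholm theory), I would search for a pair $(u,h)$ with $u>0$, $u\to 1$ at infinity, and $h$ a smooth compactly supported symmetric $2$-tensor, solving the coupled nonlinear equation
\[
P\bigl(u^{s}\tilde g_\rho + h,\;\tilde f_\rho\bigr) = Q.
\]
The linearization at $(u,h)=(1,0)$ splits as $DT_{\tilde g_\rho,\tilde f_\rho}(v)+DS_{\tilde g_\rho,\tilde f_\rho}(k)$. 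For $\rho$ large, $(\tilde g_\rho,\tilde f_\rho)$ is close in the relevant norm to $(g_{\mathbb{E}},0)$, where $DS$ reduces to the classical scalar curvature linearization $\partial_i\partial_j k_{ij}-\Delta\,\mathrm{tr}(k)$, known to be surjective onto appropriate (weighted) function spaces when acting on compactly supported $k$. This surjectivity compensates any non-trivial kernel of $DT$, so the coupled linearization is surjective between the weighted Sobolev spaces, and the implicit function theorem yields a small $(u,h)$ with $u-1, h\in W^{2,p}_{-\tau}$.

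The hard part will be making this surjectivity statement precise and uniform in $\rho$, including the duality/closed-range analysis needed to handle the drift term in $DT$ and to control the cokernel of the coupled operator in weighted spaces. Once the solution is in hand, setting $g_\varepsilon = u^{s}\tilde g_\rho + h$ and $f_\varepsilon = \tilde f_\rho$ (and smoothing by elliptic regularity if necessary) gives a smooth pair with $P(g_\varepsilon,f_\varepsilon)=Q\geq 0$ globally; outside a compact set containing $\mathrm{supp}(h)\cup\mathrm{supp}(Q)\cup B_{2\rho}$ we have $\tilde f_\rho=0$, $h=0$, so $g_\varepsilon = u^{s}g_{\mathbb{E}}$ and $P(u^{s}g_{\mathbb{E}},0)=R(u^{s}g_{\mathbb{E}})=0$, which is exactly the harmonicity of $u$; the expansion $u=1+A_{\varepsilon,k}r^{2-n}+O(r^{1-n})$ then follows from the standard Bôcher-type expansion for decaying harmonic functions on $\mathbb{R}^n\setminus B_{r_k}$. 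Smallness of $(u-1, h)$ in the relevant norms also ensures the mass shift from $(\tilde g_\rho,\tilde f_\rho)$ to $(g_\varepsilon, f_\varepsilon)$ is at most $\varepsilon/2$, yielding the full estimate.
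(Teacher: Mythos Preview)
Your overall architecture matches the paper's: cutoff to $(g_{\mathbb E},0)$ on each end, then a coupled correction $(u,h)\mapsto P(u^s\tilde g_\rho+h,\tilde f_\rho)=Q$ with $Q\ge 0$ and compactly supported, using surjectivity of $DS$ to compensate for the possibly nontrivial cokernel of $DT$. The choice $Q=\chi_\rho P(g,f)$, the observation that outside a compact set the equation becomes $\Delta_{\mathbb E}u=0$, and the B\^ocher expansion are all as in the paper.

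The gap is in your justification of surjectivity. You write that for $\rho$ large, $(\tilde g_\rho,\tilde f_\rho)$ is close to $(g_{\mathbb E},0)$ and then perturb from the Euclidean linearization. But the limit goes the other way: as $\rho\to\infty$, $\chi_\rho\to 1$ and hence $(\tilde g_\rho,\tilde f_\rho)\to(g,f)$ in $\mathcal W^{2,p}_{-\tau}\times W^{2,p}_{-\tau}$, \emph{not} to $(g_{\mathbb E},0)$. So you cannot get surjectivity of $DS_{\tilde g_\rho,\tilde f_\rho}$ by perturbing off the Euclidean operator; you need it for $DS_{g,f}$ itself. The paper handles this directly (Lemma~\ref{Surjectivity}): the adjoint $A^*$ is computed, any $\phi\in\ker A^*$ is shown to satisfy a scalar equation that forces infinite-order vanishing at infinity (via a weighted-Poincar\'e bootstrap), and then a Kelvin transform plus Kazdan's strong unique continuation \cite{Kazdan1988} yields $\phi\equiv 0$. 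This is where the real work is, and it does not follow from a ``close to Euclidean'' argument.

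A second point you leave implicit: to apply the implicit function theorem you need an \emph{isomorphism}, not just surjectivity. The paper achieves this by restricting $h$ to a fixed finite-dimensional span $\mathcal K_2$ of compactly supported tensors whose images under $DS_{g,f}$ complement the range of $DT_{g,f}$, and restricting $u-1$ to a complement $\mathcal K_1$ of $\ker DT_{g,f}$. The IFT is then run with the cutoff scale $\lambda$ as the continuous parameter and base point $\lambda=0$, where $(u,h)=(1,0)$ solves the equation exactly and the linearization on $\mathcal K_1\times\mathcal K_2$ is invertible; continuity in $\lambda$ (using $(\tilde g_\lambda,\tilde f_\lambda)\to(g,f)$) then produces solutions for small $\lambda$. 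This packaging avoids any need for uniformity-in-$\rho$ estimates and makes the compact support of $h$ automatic.
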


\subsection{Operators and their linearizations}

In order to distinguish weighted spaces of tensors from that of functions, we write
\[
\W_{-\tau}^{2,p} = \{\text{$h$ is a symmetric $(0,2)$-tensor on $M$ with $W_{-\tau}^{2,p}$-components}\}.
\]
For later use, we define
\begin{equation}\label{Eq: T}
T_{g,f}:\left (1+W_{-\tau}^{2,p}\right) \rightarrow L_{-\tau-2}^{p}, \ \ T_{g,f}(u) = P(u^{s}g,f),
\end{equation}
\begin{equation}\label{Eq: S}
S_{g,f}: \W_{-\tau}^{2,p}\rightarrow L_{-\tau-2}^{p}, \ \ S_{g,f}(h) = P(g+h,f),
\end{equation}
\begin{equation}\label{Eq: Phi}
\Phi_{g,f}: (1+W_{-\tau}^{2,p})\times\W_{-\tau}^{2,p}  \rightarrow L_{-\tau-2}^{p}, \ \ \Phi_{g,f}(u,h)= P(u^{s}g+h,f).
\end{equation}
Notice that all these operators are defined by changing input tensor of $P$-scalar curvature. Since the input tensor of $P$-scalar curvature is always required to be positive-definite, these operators are actually well-defined only in small $W^{2,p}_{-\tau}$- or $\mathcal W^{2,p}_{-\tau}$-neighborhoods. Since no confusion seems to be caused on this point, we just omit the accurate description of these neighborhoods for convenience.

As preparation it is straightforward to show
\begin{lma}\label{Lem: linearization}
We have
\begin{equation}\label{DT}
DT_{g,f}\big|_{u=1}(v) = -(n-1)s\Delta_g v +4\langle\mathrm{d}v,\mathrm{d}f\rangle_g-sP(g,f)v, \\[1mm]
\end{equation}
\begin{equation}\label{DS}
\begin{split}
DS_{g,f}&\big|_{h=0}(\xi) = {}  -\Delta_g(\tr_g \xi)+\Div_g\Div_g \xi-\langle \xi,\Ric(g)\rangle_g\\[1mm]
&-2\langle \xi,\Hess_g f\rangle_g  -2\langle\Div_g \xi,\mathrm df\rangle_g+\langle \mathrm d\tr_g \xi,\mathrm df\rangle_g +\langle \xi,\mathrm df\otimes \mathrm df\rangle_g,\\[1mm]
\end{split}
\end{equation}
\begin{equation}\label{DPhi}
D\Phi_{g,f}\big|_{(u,h)=(1,0)}(v,\xi) = DT_{g,f}\big|_{u=1}(v)+DS_{f}\big|_{h=g}(\xi).
\end{equation}
\end{lma}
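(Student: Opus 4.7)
All three identities are pointwise first variations of the weighted scalar curvature, so the strategy is purely computational: combine standard conformal-transformation and variational identities, then apply the chain rule. I would derive \eqref{DT} and \eqref{DS} independently, and then read off \eqref{DPhi}.

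For \eqref{DT}, the plan is to exploit the closed-form expression
\[
T_{g,f}(u) = u^{-\frac{n+2}{n-2}}\left(-\tfrac{4(n-1)}{n-2}\Delta_g u + 4\langle \mathrm du,\mathrm df\rangle_g + P(g,f)u\right)
\]
already recorded in the introduction. Setting $u = 1 + tv$ and differentiating at $t=0$, the $u^{-(n+2)/(n-2)}$ factor produces $-\frac{n+2}{n-2}\,vP(g,f)$, while differentiating the bracket yields $-\frac{4(n-1)}{n-2}\Delta_g v + 4\langle \mathrm dv,\mathrm df\rangle_g + P(g,f)v$. The two $P(g,f)v$ contributions collapse via $-\frac{n+2}{n-2}+1 = -s$ to $-sP(g,f)v$, and $\frac{4(n-1)}{n-2} = (n-1)s$ rewrites the Laplacian coefficient, producing exactly \eqref{DT}.

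For \eqref{DS} the plan is to split $P(g+\xi,f) = R(g+\xi) + 2\Delta_{g+\xi}f - |\mathrm df|_{g+\xi}^{2}$ and differentiate each piece at $\xi = 0$. The classical variation of the scalar curvature gives $DR_g(\xi) = -\Delta_g\tr_g\xi + \Div_g\Div_g\xi - \langle\xi,\Ric(g)\rangle_g$. The linearization of the Laplacian acting on the fixed function $f$ reads $D(\Delta_g f)(\xi) = -\langle\xi,\Hess_g f\rangle_g - \langle\Div_g\xi,\mathrm df\rangle_g + \tfrac12\langle \mathrm d\tr_g\xi,\mathrm df\rangle_g$, which follows from $\dot g^{ij} = -\xi^{ij}$ together with the variation of Christoffel symbols $\dot\Gamma_{ij}^{k} = \tfrac12 g^{kl}(\nabla_i\xi_{jl}+\nabla_j\xi_{il}-\nabla_l\xi_{ij})$ contracted against $g^{ij}\partial_k f$. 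Finally $D(|\mathrm df|_g^2)(\xi) = -\langle\xi,\mathrm df\otimes\mathrm df\rangle_g$ is immediate from $D(g^{ij}) = -\xi^{ij}$. Summing the first identity, twice the second, and minus the third reproduces the right-hand side of \eqref{DS} term by term.

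Formula \eqref{DPhi} is a pure chain rule: restricting $\Phi_{g,f}(u,h) = P(u^s g + h,f)$ to $h = 0$ recovers $T_{g,f}(u)$, so the $u$-derivative at $(1,0)$ equals $DT_{g,f}|_{u=1}$; restricting to $u = 1$ recovers $S_{g,f}(h)$, so the $h$-derivative at $(1,0)$ equals $DS_{g,f}|_{h=0}$. The total differential at $(1,0)$ is the sum of these two partial differentials. The only computationally delicate ingredient in the whole lemma is the Christoffel-symbol calculation underlying $D(\Delta_g f)(\xi)$, which is responsible for the divergence and trace-gradient terms in \eqref{DS}; everything else is mechanical bookkeeping.
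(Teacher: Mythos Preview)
Your proposal is correct and follows essentially the same route as the paper. The only cosmetic difference is in \eqref{DT}: the paper rebuilds $T_{g,f}(1+tv)$ from the separate conformal-change formulas for $R(u^sg)$ and $\Delta_{u^sg}f$, whereas you differentiate the already-assembled closed form $T_{g,f}(u)=u^{-(n+2)/(n-2)}(\cdots)$ quoted in the introduction; this is a harmless shortcut and the computations for \eqref{DS} and \eqref{DPhi} are identical to the paper's.
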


\begin{proof}
Since \eqref{DPhi} is trivial, it suffices to prove \eqref{DT} and \eqref{DS}. For \eqref{DT}, it is well-known that
\[
R(u^sg) = u^{-(1+s)}\big(R(g)u-(n-1)s\Delta_{g}u\big)
\]
and
\[
\Delta_{u^{s}g}f = u^{-s}\Delta_{g}f+2s^{-1}u^{-2s}\langle\mathrm{d}u^{s},\mathrm{d}f\rangle_{g}.
\]
Then for any $v\in W_{-\tau}^{2,p}$,
\[
\begin{split}
& T_{g,f}(1+tv)\\[1.5mm]
= {} & (1+tv)^{-(1+s)}\big(R(g)(1+tv)-(n-1)s\Delta_{g}(1+tv)\big) \\[1.5mm]
+ \  & 2(1+tv)^{-s}\Delta_{g}f+4s^{-1}(1+tv)^{-2s}\langle\mathrm{d}(1+tv)^{s},\mathrm{d}f\rangle_{g}-(1+tv)^{-s}|\mathrm df|_{g}^{2},
\end{split}
\]
which implies
\[
\left.DT_{g,f}\right|_{u=1}(v) = \frac{d}{dt}\bigg|_{t=0}T_{g,f}\big(1+tv\big)
= -(n-1)s\Delta_g v +4\langle\mathrm{d}v,\mathrm{d}f\rangle_g-sP(g,f)v,
\]
as required.

\smallskip

For \eqref{DS}, recall from \cite[Remark 3.8 and Lemma 3.2]{ChowKnopf2004} that
\begin{equation}\label{DS eqn 1}
\left.\frac{\mathrm d}{\mathrm d t}\right|_{t=0}R(g+t\xi) = -\Delta_g(\tr_g \xi)+\Div_g\Div_g \xi-\langle \xi,\Ric(g)\rangle_g
\end{equation}
and
$$
\left.\frac{\mathrm d}{\mathrm d t}\right|_{t=0}\Gamma_{ij}^k(g+t\xi)=\frac{1}{2}g^{kl}(\nabla_i \xi_{jl}+\nabla_j \xi_{il}-\nabla_l \xi_{ij}).
$$
Then we compute
\begin{equation}\label{DS eqn 2}
\begin{split}
\left.\frac{\mathrm d}{\mathrm d t}\right|_{t=0}\Delta_{g+t\xi}f
= {} & \left.\frac{\mathrm d}{\mathrm d t}\right|_{t=0}(g+t\xi)^{ij}\left(f_{ij}-\Gamma_{ij}^{k}(g+t\xi)\cdot f_{k}\right)\\
= {} &-\langle \xi,\Hess_g f\rangle_g-g^{ij}\left.\frac{\mathrm d}{\mathrm d t}\right|_{t=0}\Gamma_{ij}^k(g+t\xi)\cdot f_k\\
= {} & -\langle \xi,\Hess_g f\rangle_g-\langle\Div_g \xi,\mathrm df\rangle_g+\frac{1}{2}\langle \mathrm d\tr_g \xi,\mathrm df\rangle_g.
\end{split}
\end{equation}
It is also easy to see
\begin{equation}\label{DS eqn 3}
\left.\frac{\mathrm d}{\mathrm d t}\right|_{t=0}|\mathrm df|^2_{g+t\xi}=-\langle \xi,\mathrm df\otimes \mathrm df\rangle_g.
\end{equation}
Then \eqref{DS} follows from \eqref{DS eqn 1}, \eqref{DS eqn 2} and \eqref{DS eqn 3}.
\end{proof}

\subsection{Surjectivity of the operator $DS_{g,f}\big|_{h=0}$}
In this subsection, we are going to prove

\begin{lma}\label{Surjectivity}
The operator $DS_{g,f}\big|_{h=0}:\W_{-\tau}^{2,p} \rightarrow L_{-\tau-2}^{p}$ is surjective.
\end{lma}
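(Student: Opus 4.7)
The plan is to show surjectivity by exploiting the overdetermined-elliptic structure of $DS_{g,f}|_{h=0}$ together with triviality of the kernel of its formal $L^2$-adjoint. From Lemma \ref{Lem: linearization} the principal symbol at a nonzero covector $\eta$ maps a symmetric $2$-tensor $\xi$ to $\xi(\eta,\eta) - |\eta|_g^2\,\tr_g\xi$. Since $\xi = cg$ is sent to $-(n-1)c|\eta|_g^2$, this symbol is surjective onto $\mathbb{R}$, so $DS_{g,f}|_{h=0}$ is overdetermined elliptic. By the weighted elliptic theory collected in Appendix \ref{Sec: Appendix A}, any such operator between $\W_{-\tau}^{2,p}$ and $L_{-\tau-2}^p$ (with $-\tau$ non-exceptional for $\Delta_g$, which holds in our range $\tau \in (\frac{n-2}{2}, n-2)$ for generic $\tau$) has closed image, and the cokernel is canonically identified with the kernel of $DS_{g,f}^*$ on the dual weighted space.

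A term-by-term integration by parts, using the symmetry of $\xi$ throughout, yields
\[
DS_{g,f}^*\,v = \Hess_g v - \bigl(\Delta_g v + \langle dv, df\rangle_g + v\,\Delta_g f\bigr)g + dv\otimes df + df\otimes dv - v\,\Ric(g) + v\,df\otimes df,
\]
which for $f \equiv 0$ reduces to the classical Fischer--Marsden adjoint $\Hess_g v - \Delta_g v\cdot g - v\,\Ric(g)$. Taking the trace of the equation $DS_{g,f}^* v = 0$ produces the scalar PDE
\[
-(n-1)\Delta_g v - (n-2)\langle dv, df\rangle_g - \bigl(P(g,f) + (n-2)\Delta_g f\bigr)v = 0,
\]
while the traceless part determines $\Hess_g v$ pointwise in terms of $v$, $dv$, $\Ric(g)$, $f$ and $\Hess_g f$. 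Together these form an overdetermined elliptic system for the single scalar $v$.

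It remains to prove that $v \equiv 0$ is the only solution of this system in the appropriate dual weighted space, and this is the main obstacle. The plan is to run an asymptotic analysis on each AF end: the prescribed decay of $g - g_\mathbb{E}$, $df$ and $\Hess_g f$ forces the overdetermined system to reduce asymptotically to the classical static-potential equation $\Hess_g v - \Delta_g v\cdot g - v\,\Ric(g) = 0$, whose only solutions on Euclidean space are affine functions. The decay condition $v \to 0$ at infinity (dictated by the dual weighted space, together with $\tau > \tfrac{n-2}{2}$) then rules out non-trivial asymptotic behaviour, and unique continuation for the full overdetermined system propagates $v \equiv 0$ from the asymptotic region to all of $M$. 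The delicate step is carrying through the asymptotic expansion in the weighted setting and checking that the $f$-dependent lower-order terms do not spoil the classical Liouville-type rigidity; this is essentially the weighted generalisation of Corvino-type triviality of static potentials on AF manifolds.
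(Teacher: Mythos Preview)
Your overall architecture---closed range plus trivial kernel of the formal adjoint---is the same as the paper's, and your computation of $DS_{g,f}^*$ and its trace is correct (your symmetrized $dv\otimes df+df\otimes dv$ agrees with the paper's $2\,d\phi\otimes df$ once paired against symmetric $\xi$). However, two of the steps you defer are precisely where the work lies, and in their present form they are genuine gaps.

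\textbf{Closed range.} Appendix \ref{Sec: Appendix A} only treats \emph{scalar} second-order operators; there is no overdetermined-elliptic Fredholm theory there, so your citation does not go through. The paper sidesteps this by restricting $A:=DS_{g,f}|_{h=0}$ to conformal variations $\xi=\psi g$, obtaining a scalar operator $B(\psi)=A(\psi g)$ to which Theorem \ref{Fredholm} \emph{does} apply. Since $B$ is Fredholm, $\mathrm{Ran}\,B$ has finite codimension in $L^p_{-\tau-2}$; as $\mathrm{Ran}\,B\subset\mathrm{Ran}\,A$, the latter also has finite codimension and is therefore closed. You need either this trick or an external reference for weighted overdetermined Fredholm theory.

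\textbf{Triviality of $\mathrm{Ker}\,A^*$.} This is the main gap. The dual-space decay only gives $v\to 0$ at infinity; it does \emph{not} produce $v\equiv 0$ on any open set, so there is nothing for unique continuation to propagate. Saying the system ``reduces asymptotically'' to Fischer--Marsden and that affine solutions are excluded by decay does not, by itself, force $v$ to vanish---the perturbed system is not exactly Fischer--Marsden, and a Liouville statement for the limiting operator says nothing about solutions of the perturbed one. The paper closes this in two concrete steps. First, the Hessian equation yields a pointwise bound $|\Hess_g v|\le C|df||dv|+C(|\Ric|+|R|+|\Delta_g f|+|df|^2)|v|$; feeding this into a weighted Poincar\'e inequality and iterating with the scalar trace equation bootstraps the decay of $v$ to \emph{infinite order} at infinity. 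Second, a Kelvin-type inversion $y=x/|x|^2$, $\bar v=|x|^{n-2}v$, $\bar g=|x|^{-4}g$ converts this into infinite-order vanishing of $\bar v$ at the origin, where $\bar g$ extends to a Lipschitz metric (this is exactly where the technical hypothesis $(\ast)$ of Definition \ref{Defn: AF} is used when $n=3$). The scalar trace equation becomes an elliptic inequality $|\Delta_{\bar g}\bar v|\le C|y|^{\tau-1}|d\bar v|+C|y|^{\tau-2}|\bar v|$, and strong unique continuation (Kazdan) gives $\bar v\equiv 0$ near $y=0$, hence $v\equiv 0$ on an end; ordinary unique continuation for the scalar equation then propagates this to all of $M$. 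Your sketch contains neither the decay bootstrap nor the Kelvin transform, and without them the argument does not close.
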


\begin{proof}
Let us write $A=DS_{g,f}\big|_{h=0}$ for short and split the argument into the following three steps.

\bigskip
\noindent
{\bf Step 1.} The range of $A$ is closed.
\bigskip

Define an operator $B:W^{2,p}_{-\tau}\rightarrow L_{-\tau-2}^{p}$ by $B(\psi)=A(\psi g)$. Using \eqref{DS}, we see that
\[
B(\psi) = -(n-1)\Delta_{g}\psi+(n-2)\langle \mathrm d\psi,\mathrm df\rangle_g-\left(R(g)+2\Delta_{g}f-|\mathrm{d}f|_{g}^{2}\right)\psi.
\]
Then its $L^2$-adjoint operator $B^{*}:L_{\tau+2-n}^{p}\rightarrow(W_{-\tau}^{2,p})^{*}$ is given by
\begin{equation}\label{Eq: B*}
B^{*}(\phi) = -(n-1)\Delta_{g}\phi-(n-2)\langle \mathrm d\phi,\mathrm df\rangle_g-\left(R(g)+n\Delta_{g}f-|\mathrm{d}f|_{g}^{2}\right)\phi.
\end{equation}
By Theorem \ref{Fredholm}, the operator $B$ is Fredholm and so the range of $B$ has finite codimension in $L_{-\tau-2}^{p}$. Since the range of $A$ contains the range of $B$, it is of finite codimension in $L_{-\tau-2}^{p}$ as well. It follows that the range of $A$ is closed.

\bigskip
\noindent
{\bf Step 2.} Denote the $L^2$-adjoint operator of $A$ by $A^{*}$. For any $\phi\in\mathrm{Ker}A^{*}$, $\phi$ and $\de\phi$ vanish to infinite order at infinity, i.e. $\phi\in W^{1,\infty}_{-N}$ for any positive integer $N$.\bigskip

A direct calculation shows that the $L^2$-adjoint operator $A^{*}:L_{\tau+2-n}^{p}\rightarrow(\W_{-\tau}^{2,p})^{*}$ is given by
\begin{equation}\label{adjoint A}
\begin{split}
A^{*}(\phi) = {} & -(\Delta_g \phi)g+\Hess_g \phi+2\mathrm d\phi\otimes \mathrm df-\langle \mathrm d \phi,\mathrm df\rangle_g g \\[1mm]
& -\phi(\Delta_g f) g+\phi \mathrm df\otimes \mathrm df-\phi\Ric(g).
\end{split}
\end{equation}
For any $\phi\in\mathrm{Ker}A^{*}$, we have $A^{*}(\phi)=0$. Taking trace of both sides of $A^{*}(\phi)=0$, it then follows that
\begin{equation}\label{Laplacian phi}
\begin{split}
\Delta_g \phi = {} & -\frac{1}{n-1}\Big((n-2)\langle\mathrm d\phi,\mathrm df\rangle_{g}+\left(R(g)+n\Delta_{g}f-|\mathrm{d}f|_{g}^{2}\right)\phi\Big).
\end{split}
\end{equation}
Substituting this into \eqref{adjoint A} and using $A^{*}(\phi)=0$, we see that
\begin{equation}\label{Hess phi}
\begin{split}
\Hess_g \phi  &= {}  -\frac{1}{n-1}\Big((n-2)\langle\mathrm d\phi,\mathrm df\rangle_{g}+\left(R(g)+n\Delta_{g}f-|\mathrm{d}f|_{g}^{2}\right)\phi\Big)g \\[2mm]
& -2\mathrm d\phi\otimes \mathrm df+\langle \mathrm d \phi,\mathrm df\rangle_g g+\phi(\Delta_g f) g-\phi \mathrm df\otimes \mathrm df+\phi\Ric(g).
\end{split}
\end{equation}

\begin{claim}\label{vanishing order claim 1}
If $\phi\in L^{2}_{-\mu}$ for some $\mu$, then $\phi\in W^{2,q}_{-\nu}$ for any $q$ and $\nu<\mu$.
\end{claim}

Applying Theorem \ref{weighted inequalities} (iii) and Theorem \ref{elliptic theory} repeatedly to \eqref{Laplacian phi}, we obtain $\phi\in L^{\infty}_{-\mu}$. By Theorem \ref{weighted inequalities} (i), $\phi\in L^{q}_{-\nu}$ for any $q$ and $\nu<\mu$. Using Theorem \ref{elliptic theory} again, we have $\phi\in W^{2,q}_{-\nu}$.

\begin{claim}\label{vanishing order claim 2}
If $\phi\in W^{2,q}_{-\nu}$ for any $q$ and $\nu<\mu$, then $\phi\in L^{2}_{-\nu-\tau}$.
\end{claim}

By \eqref{Hess phi}, we have
\begin{equation}\label{Hess phi 1}
|\Hess_g \phi|_{g} \leq C|\mathrm df|_{g}|\mathrm d\phi|_{g}+C\Big( |R(g)|+|\Ric(g)|+|\Delta_{g}f|+|\mathrm df|_{g}^{2} \Big)|\phi|.
\end{equation}
The assumption $\phi\in W^{2,q}_{-\nu}$ shows
\begin{equation}\label{vanishing order claim 2 eqn 1}
\text{$|\phi| \in L^{q}_{-\nu}$ and $|\mathrm d\phi|_{g} \in L^{q}_{-\nu-1}$ for any $q$}.
\end{equation}
Using the assumption (b) in Definition \ref{Defn: AF} and Theorem \ref{weighted inequalities} (iii), we see that
\begin{equation}\label{vanishing order claim 2 eqn 2}
|R(g)|, \ |\Ric(g)|, \ |\Delta_{g}f| \in L^{p}_{-\tau-2}\mbox{ and }
|\mathrm df|_{g} \in L^{\infty}_{-\tau-1}.
\end{equation}
Substituting \eqref{vanishing order claim 2 eqn 1} and \eqref{vanishing order claim 2 eqn 2} into \eqref{Hess phi 1} and using Theorem \ref{weighted inequalities} (ii), we obtain $|\Hess_g \phi|_{g}\in L^{2}_{-\nu-\tau-2}$. Using the following weighted Poincar\'e inequality on each end $E_k$ (see \cite[(10)]{CorvinoSchoen2006} for $n=3$ and \cite[p.111]{EHLS2016} for general $n$)
\[
\int_{M\setminus B_{R}}\left(\phi\cdot|x|^{\tau+\nu}\right)^{2}|x|^{-n}\mathrm{d}x
\leq C\int_{M\setminus B_{R}}\left(|\Hess_g \phi|_{g}\cdot|x|^{\tau+\nu+2}\right)^{2}|x|^{-n}\mathrm{d}x,
\]
we obtain $\phi\in L^{2}_{-\nu-\tau}$.

\smallskip

Now let us prove Step 2. Recall that $\phi\in L_{\tau+2-n}^{p}$. From Theorem \ref{weighted inequalities} (i), we see that $\phi\in L_{\tau'+2-n}^{2}$ for any $\tau'>\tau$. Using Claim \ref{vanishing order claim 1} and \ref{vanishing order claim 2} repeatedly, we can obtain $\phi\in W^{2,q}_{-N}$ for any $q$ and $N$. By Theorem \ref{weighted inequalities} (iii), we obtain $\phi\in W^{1,\infty}_{-N}$.

\bigskip
\noindent
{\bf Step 3.} $\mathrm{Ker}A^{*}$ is trivial.
\bigskip

Using assumptions (c), $f\in W_{-\tau}^{2,p}$ in Definition \ref{Defn: AF} and Theorem \ref{weighted inequalities} (iii), we have
\[
R(g)\in L_{-2\tau-2}^{\infty},\,\Delta_{g}f\in L_{-2\tau-2}^{\infty}\mbox{ and }|\mathrm df|_{g} \in L^{\infty}_{-\tau-1}.
\]
Fixing an end $E_k$ and using \eqref{Laplacian phi}, we obtain
\begin{equation}\label{Delta g phi}
|\Delta_g \phi| \leq C|x|^{-\tau-1}|\mathrm d\phi|_{g}+C|x|^{-2\tau-2}|\phi|.
\end{equation}
Define a new metric $\bar{g}=|x|^{-4}g$ on $E_k$. In particular, we have
\begin{equation}\label{Eq: Delta bar g}
\Delta_{\bar g}\phi = |x|^{4}\Delta_{g}\phi+2s^{-1}|x|^{8}\langle \mathrm{d}|x|^{-4},\mathrm{d}\phi\rangle_{g}.
\end{equation}
In the following, we consider the function $\bar\phi:=|x|^{n-2}\phi$. Using \eqref{Eq: Delta bar g}, it is easy to compute
\begin{equation}\label{Delta bar g bar phi 1}
\begin{split}
\Delta_{\bar g}\bar\phi&=\Delta_{\bar g} |x|^{n-2}\cdot \phi+2\langle\mathrm d|x|^{n-2},\mathrm d\phi\rangle_{\bar g}+|x|^{n-2}\cdot
\Delta_{\bar g}\phi\\
&=\Delta_{\bar g} |x|^{n-2}\cdot \phi+|x|^{n+2}\Delta_{g}\phi.
\end{split}
\end{equation}
From \eqref{Eq: Delta bar g} and the asymptotical flatness of $g$, we can check $\Delta_{\bar g}|x|^{n-2}=O(|x|^{n-\tau})$. Combining this with \eqref{Delta g phi} and \eqref{Delta bar g bar phi 1}, we conclude
\begin{equation}\label{Delta bar g bar phi 2}
\begin{split}
\left|\Delta_{\bar g}\bar\phi\right|
\leq {} & C|x|^{n+1-\tau}|\mathrm d\phi|_g+C\left(|x|^{n-\tau}+|x|^{n-2\tau}\right)|\phi|\\[0.5mm]
\leq {} & C|x|^{1-\tau}\left|\mathrm d\bar\phi\right|_{\bar g}+C\left(|x|^{2-\tau}+|x|^{2-2\tau}\right)|\bar\phi| \\
\leq {} & C|x|^{1-\tau}\left|\mathrm d\bar\phi\right|_{\bar g}+C|x|^{2-\tau}|\bar\phi|.
\end{split}
\end{equation}
Let us define
$$y=\frac{x}{|x|^{2}}$$ according to the Kelvin transformation and then we work in the new coordinate system $(B_{R^{-1}}\setminus\{0\};\{y_{i}\}_{i=1}^{n})$.
Through a direct computation, we have
\[
\begin{split}
&\bar g\left(\frac{\partial}{\partial y_i},\frac{\partial}{\partial y_j}\right)=g_{ij}\left(\frac{y}{|y|^2}\right)-2|y|^{-2}\left[g_{il}\left(\frac{y}{|y|^2}\right)-\delta_{il}\right]y_jy_l\\
&\qquad\quad-2|y|^{-2}\left[g_{kj}\left(\frac{y}{|y|^2}\right)-\delta_{kj}\right]y_iy_k+4|y|^{-4}\left[g_{kl}\left(\frac{y}{|y|^2}\right)-\delta_{kl}\right]y_ky_ly_iy_j.
\end{split}
\]
It follows from
\begin{equation}\label{Eq: -1 decay}
g_{ij}-\delta_{ij}\in W^{1,\infty}_{-1}(E_k)
\end{equation}
that the metric $\bar g$ can be extended to a Lipschitz metric in $B_{1/R}$,
where we use the fact $\tau>1$ when $n\geq 4$ and the technical assumption $(\ast)$ in Definition \ref{Defn: AF} when $n=3$ to obtain \eqref{Eq: -1 decay} respectively.
Recall from Step 2 that $\phi$ and $\de\phi$ have infinite vanishing order at infinity. As a result, we have
\begin{equation}\label{Eq: infinite order}
\left|\bar \phi(y)\right|+\left|\left(\mathrm d\bar\phi\right)(y)\right|_{\bar g}=O\big(|y|^N\big)\mbox{ for all } N>0.
\end{equation}
In particular, the function $\bar\phi$ belongs to $W^{1,\infty}(B_{1/R})$. Rewriting \eqref{Delta bar g bar phi 2} as
\begin{equation*}
|(\Delta_{\bar{g}}\bar\phi)(y)| \leq C|y|^{\tau-1}|(\mathrm d\bar\phi)(y)|_{\bar{g}}+C|y|^{\tau-2}|\bar\phi(y)|
\end{equation*}
and using \eqref{Eq: infinite order}, we conclude $\Delta_{\bar g}\bar \phi \in L^\infty(B_{1/R})$. Through approximation, one can verify
$$
\int_{B_{1/R}}\Delta_{\bar g}\bar \phi \cdot \eta\,\mathrm{dvol}_{\bar g}=-\int_{B_{1/R}}\langle\mathrm d\bar\phi,\mathrm d\eta\rangle_{\bar g}\,\mathrm{dvol}_{\bar g}\mbox{ for any }\eta \in C^\infty_0(B_{1/R}).
$$
Now we can apply the strong continuation property from \cite[Theorem 1.8 or Lemma 2.4]{Kazdan1988} to deduce $\bar \phi\equiv 0$. As a result, we obtain $\phi\equiv 0$ and so $\mathrm{Ker}A^{*}$ is trivial.

\bigskip

Now it is easy to prove Lemma \ref{Surjectivity}. From Step 1 and 3, we know that $A$ has closed range and that $\mathrm{Ker}A^{*}$ is trivial, which yields the surjectivity of $A$.
\end{proof}

\subsection{Proof of the density theorem}
To prove the density theorem, we need the following lemma on continuity of mass.
\begin{lma}\label{Lem: continuity mass}
Let $(M,g,f)$ and $(M,\hat g,\hat f)$ be two AF weighted manifolds of $(p,\tau)$-type with the same underlying space. Given any $\ve>0$, there exists a constant $\delta>0$ depending only on $\ve$, $n$, $p$, $\tau$,
$$
\|g-g_{\mathbb E}\|_{W^{2,p}_{-\tau}}, \, \|f\|_{W_{-\tau}^{2,p}} \mbox{ and } \|P(g,f)-P(\hat g,\hat f)\|_{L^1_{-2\tau-2}}
$$ such that if
$$
\|\hat g-g\|_{W^{2,p}_{-\tau}}< \delta \mbox{ and }\|\hat f-f\|_{W^{2,p}_{-\tau}} < \delta,
$$
then we have
$$
|m(M,g,f,E_k)-m(M,\hat g,\hat f,E_k)|<\ve\mbox{ for each end }E_k.
$$
\end{lma}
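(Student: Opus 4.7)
The plan is to convert the boundary integral defining the weighted mass into a fixed--radius boundary piece plus a bulk integral, and then split the mass difference $m(M,g,f,E_k)-m(M,\hat g,\hat f,E_k)$ into three controllable pieces. Concretely, starting from the equivalent formula \eqref{equivalent formula weighted mass} on an end $E_k$ in its coordinates $\Phi_k$, for any $\rho_{0}<\rho$ the divergence theorem gives
\begin{equation*}
\int_{S_\rho}(\partial_j g_{ij}-\partial_i g_{jj}+2\partial_i f)\nu^i\,\mathrm{d}\sigma-\int_{S_{\rho_{0}}}(\partial_j g_{ij}-\partial_i g_{jj}+2\partial_i f)\nu^i\,\mathrm{d}\sigma=\int_{B_\rho\setminus B_{\rho_{0}}}\partial_i(\partial_j g_{ij}-\partial_i g_{jj}+2\partial_i f)\,\mathrm{d}x.
\end{equation*}
A direct expansion shows $\partial_i(\partial_j g_{ij}-\partial_i g_{jj}+2\partial_i f)=P(g,f)+E(g,f)$, with $E(g,f)$ schematically a sum of products of pairs of factors drawn from $\{g-g_{\mathbb{E}},\partial g,\partial f\}$. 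Sending $\rho\to\infty$, and writing the analogous formula for $(\hat g,\hat f)$, the mass difference splits into (i) a boundary piece at $\rho_{0}$, (ii) $\int_{|x|\geq\rho_{0}}\bigl(P(g,f)-P(\hat g,\hat f)\bigr)\,\mathrm{d}x$, and (iii) $\int_{|x|\geq\rho_{0}}\bigl(E(g,f)-E(\hat g,\hat f)\bigr)\,\mathrm{d}x$.

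Next I would estimate each piece. For (i), the Morrey embedding $W^{2,p}\hookrightarrow C^{1}$, valid for $p>n$, bounds $\partial(g-\hat g)$ and $\partial(f-\hat f)$ pointwise by $C\delta$, giving an overall bound $C(\rho_{0})\delta$ on this piece. For (ii), the elementary inequality $\int_{|x|\geq\rho_{0}}|h|\,\mathrm{d}x\leq\rho_{0}^{n-2\tau-2}\|h\|_{L^1_{-2\tau-2}}$, valid because $2\tau+2>n$, bounds the piece in terms of the given $L^1_{-2\tau-2}$--norm and $\rho_{0}$. For (iii), I would use the weighted Sobolev and H\"older estimates from Theorem \ref{weighted inequalities} to show that each individual tail integral $\int_{|x|\geq\rho_{0}}|E(g,f)|\,\mathrm{d}x$ is bounded by $\rho_{0}^{n-2\tau-2}$ times a polynomial in the $W^{2,p}_{-\tau}$--norms of the data, and likewise for $(\hat g,\hat f)$; the triangle inequality then controls the difference without requiring any cancellation. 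The order of choices is: first take $\rho_{0}$ large, depending on $\ve$ and the three parameter bounds in the hypothesis, so that pieces (ii) and (iii) are each less than $\ve/3$; then take $\delta$ small, depending on $\rho_{0}$ and the Morrey constant, so that piece (i) is less than $\ve/3$.

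The main obstacle will be verifying the pointwise and tail bounds on the quadratic error $E(g,f)$: one must carefully track which bilinear combinations of first--order quantities arise in the expansion of $R(g)$ and of the correction to $\Delta_g f$ relative to the Euclidean Laplacian, check that every such term is pointwise bounded by $(|g-g_{\mathbb E}|+|\partial g|+|\partial f|)^2$, and then apply the weighted product estimates in Theorem \ref{weighted inequalities} to convert this into an $L^1$--tail bound with the prefactor $\rho_{0}^{n-2\tau-2}$. The hypothesis $\tau>\frac{n-2}{2}$ enters exactly at this point, providing the integrability of the resulting $|x|^{-2\tau-2}$ tail that makes all three estimates close up.
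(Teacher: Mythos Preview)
Your proposal is correct and follows essentially the same route as the paper: both proofs use the divergence theorem to rewrite the mass as a fixed-radius boundary term plus a bulk integral over $\{|x|\ge\rho_0\}$, identify the bulk integrand as $P(g,f)$ plus a quadratic error controlled by $|x|^{-2\tau-2}$, and then choose first $\rho_0$ large (using $\tau>\tfrac{n-2}{2}$) and then $\delta$ small. The only cosmetic differences are that the paper bundles your pieces (ii) and (iii) together and cites Bartnik for the quadratic expansion of $R(g)$, and it records the sharper boundary bound $C\rho_0^{\,n-2-\tau}\delta$ in place of your $C(\rho_0)\delta$.
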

\begin{proof}
Fix one end $E_k$. By \eqref{equivalent formula weighted mass}, we have
\[
\begin{split}
& m(M,g,f,E_k)\\
& = \lim_{\rho\to \infty} \int_{S_\rho} (\partial_jg_{ij}-\partial_ig_{jj}+2\partial_if) \,\nu^i \, \mathrm{d}\sigma \\
& = \int_{S_\rho} (\partial_jg_{ij}-\partial_ig_{jj}+2\partial_if) \,\nu^i \, \mathrm{d}\sigma
+\int_{\{|x|\geq\rho\}} (\de_{i}\partial_jg_{ij}-\partial_{j}\de_{j}g_{ii}+2\Delta_{g}f)\, dx \\[1mm]
& =:  A(g,f,\rho)+B(g,f,\rho),
\end{split}
\]
and the same formula holds for $m(M,\hat g,\hat f,E_k)$. In the rest of the proof, we always denote $C$ to be a constant depending only on $\ve$, $n$, $p$, $\tau$,
$$
\|g-g_{\mathbb E}\|_{W^{2,p}_{-\tau}}, \, \|f\|_{W_{-\tau}^{2,p}}  \mbox{ and } |P(g,f)-P(\hat g,\hat f)|_{L^1_{-2\tau-2}}.
$$
From \cite[(4.2) in page 680]{Bartnik1986}, $f\in W_{-\tau}^{2,p}$ and Theorem \ref{weighted inequalities} (iii), we know
$$
|R(g)-(\de_{i}\partial_jg_{ij}-\partial_{j}\de_{j}g_{ii})|+|df|_{g}^{2} \leq C|x|^{-2\tau-2}.
$$
Since we want to take $\delta$ to be small, we can assume $\delta\leq 1$ a priori. Then we also have
$$
|R(\hat g)-(\de_{i}\partial_j\hat g_{ij}-\partial_{j}\de_{j}\hat g_{ii})|+|d\hat{f}|_{\hat{g}}^{2} \leq C|x|^{-2\tau-2}.
$$
This yields
\[
\begin{split}
&|B(g,f,\rho)-B(\hat g,\hat f,\rho)| \\
\leq {} & C\int_{\{|x|\geq\rho\}}|x|^{-2-2\tau}\,\mathrm dx+\int_{\{|x|\geq\rho\}}|P(g,f)-P(\hat g,\hat f)|\,\mathrm{d}x \\
\leq {} & \left(C+\|P(g,f)-P(\hat g,\hat f)\|_{L^1_{-2\tau-2}}\right)\rho^{n-2-2\tau}.
\end{split}
\]
On the other hand, we have
$$
|A(g,f,\rho)-A(\hat g,\hat{f},\rho)|\leq C\rho^{n-2-\tau}\left(\|\hat g-g\|_{W^{2,p}_{-\tau}}+\|\hat f-f\|_{W^{2,p}_{-\tau}}\right)\leq C\rho^{n-2-\tau}\delta.
$$
The proof is now completed by fixing $\rho$ such that $|B(g,f,\rho)-B(\hat g,\hat f,\rho)|<\ve/2$ and taking $\delta$ small enough such that $|A(g,f,\rho)-A(\hat g,\hat{f},\rho)|<\ve/2$.
\end{proof}

We now in a position to prove the density theorem.

\begin{proof}[Proof of Proposition \ref{Density}]
Denote
$$S_{g,f}: \mathcal W^{2,p}_{-\tau}\to L^p_{-\tau-2} \mbox{ and } T_{g,f}:(1+W^{2,p}_{-\tau})\to L^p_{-\tau-2}$$ to be the operators from \eqref{Eq: S} and \eqref{Eq: T}. Recall from Lemma \ref{Surjectivity} that the operator $DS_{g,f}\big|_{h=0}$ is surjective. From Theorem \ref{Fredholm}, we see that the operator $DT_{g,f}\big|_{u=1}$ is Fredholm and so there exist finitely many compactly supported smooth symmetric $(0,2)$-tensors $\{\chi_{k}\}_{k=1}^{N}\subset \W^{2,p}_{-\tau}$ such that their corresponding images $$\left\{DS_{g,f}\big|_{h=0}(\chi_{k})\right\}_{k=1}^{N}$$ form a basis for a complementing subspace of the range of $DT_{g,f}\big|_{u=1}$ in $L^{p}_{-\tau-2}$.

Let us take $$\mathcal K_1=\mbox{a complementing subspace of }\mathrm{Ker}\left(DT_{g,f}\big|_{u=1}\right)\mbox{ in }W_{-\tau}^{2,p}$$ and $$\mathcal K_{2}=\mathrm{span}\{\chi_{k}\}_{k=1}^{N}\subset \W_{-\tau}^{2,p}.$$
Consider the map $$\Psi_{g,f}=\Phi_{g,f}|_{(1+\mathcal K_1)\times \mathcal K_2},$$
where $\Phi_{g,f}:(1+W_{-\tau}^{2,p})\times\W_{-\tau}^{2,p}\to L^p_{-\tau-2}$ is the operator defined in \eqref{Eq: Phi}.
It follows from Lemma \ref{Lem: linearization} that the corresponding linearized map $$D\Psi_{g,f}\big|_{(u,h)=(1,0)}:\mathcal K_{1}\times \mathcal K_{2}\rightarrow L^{p}_{-\tau-2}$$ is an isomorphism.

Let $\eta:\mathbb R\to [0,1]$ be a smooth cut-off function such that
\[
\text{$\eta\equiv1$ in $(-\infty,1]$ and $\eta\equiv0$ in $[2,\infty)$}.
\]
For any $\lambda\in(-1,1)$, we define $\eta_{\lambda}(x)=\eta(|\lambda||x|)$ and take
\[
g_{\lambda} = \eta_{\lambda}g+(1-\eta_{\lambda})g_{\mathbb E}\mbox{ and }
f_{\lambda} = \eta_{\lambda}f.
\]
It is clear that $(g_{\lambda},f_{\lambda})\rightarrow(g,f)$ in $\W_{-\tau}^{2,p}\times W_{-\tau}^{2,p}$ as $\lambda\rightarrow 0$. Consider the map
$$
G:(-1,1)\times (1+\mathcal K_1)\times \mathcal K_2\to L^p_{-\tau-2}
$$
given by
$$
G(\lambda,u,h)=\Psi_{g_\lambda,f_\lambda}(u,h)-\eta_\lambda P(g,f).
$$
It is not difficult to verify the following properties
\begin{itemize}\setlength{\itemsep}{1mm}
\item $G$ is continuous;
\item $G$ has $F$-derivative $G'$ with respect to $(u,h)$, which is also continuous;
\item we have $G(0,1,0)=0$ and that the derivative
$$G'(0,1,0)=D\Psi_{g,f}\big|_{(u,h)=(1,0)}$$ is invertible.
\end{itemize}
From the implicit function theorem (see \cite[Theorem 1.2.1]{Chang2005} for instance), we conclude that for $\lambda$ small enough, there exists $(u_\lambda,h_\lambda)$ in $(1+\mathcal K_1)\times \mathcal K_2$ such that
\begin{equation}\label{Eq: g lambda}
\Psi_{g_\lambda,f_\lambda}(u_\lambda,h_\lambda)=\eta_\lambda P(g,f).
\end{equation}
Moreover, we have $(u_0,h_0)=(1,0)$ and that the map $\lambda\mapsto (u_\lambda,h_\lambda)$ is $C^1$.

Denote $\hat g_\lambda=u_\lambda^sg_\lambda+h_\lambda$. It remains to verify that the smooth pair $(\hat g_\lambda,f_\lambda)$ satisfies the desired properties. First we point out the facts $g_\lambda=g_{\mathbb E}$ and $f_\lambda=0$ as well as $h_\lambda=0$ outside a compact subset of $M$, where the last one comes from the definition of $\mathcal K_2$. As a consequence, on each end $E_k$ we have
$$
\hat g_\lambda=u_\lambda^sg_{\mathbb E}\mbox{ and }f_\lambda=0\mbox{ when }|x|\geq r_k\mbox{ for some }r_k>0.
$$
Around the infinity, \eqref{Eq: g lambda} becomes $\Delta_{\mathbb{E}}u_\lambda=0$ and it is well-known that $u_\lambda$ has the expression
$$
u_\lambda=1+\frac{A_{\lambda,k}}{r^{n-2}}+O_{2}(r^{1-n}),
$$
where $v=O_{2}(r^{1-n})$ means
\[
|v|+r|\de v|+r^{2}|\de^{2}v|\leq Cr^{1-n}.
\]
As a consequence, the weighted manifold $(M,\hat g_\lambda,f_\lambda)$ is AF with
$$
(\hat{g}_{\lambda})_{ij}-\delta_{ij}, f_{\lambda}\in W^{2,\infty}_{2-n}\mbox{ and } R(\hat{g}_{\lambda}), \Delta_{\hat{g}_{\lambda}} f_{\lambda}\in W^{1,\infty}_{-1-n}.
$$
From Theorem \ref{weighted inequalities}, it is also AF of $(p,\tau)$-type with a possibly smaller $\tau$. To see the small change of mass just notice that
$$
\|P(\hat g_\lambda,f_\lambda)-P(g,f)\|_{L^1_{-2\tau-2}} = \|(1-\eta_\lambda)P(g,f)\|_{L^1_{-2\tau-2}},
$$
which is uniformly bounded since we have
$$
(1-\eta_\lambda)P(g,f)\to 0 \mbox{ in }L^1_{-2-2\tau} \mbox{ as }\lambda\to 0.
$$
By taking $\lambda$ small enough, we can make
$$
\|\hat g-g\|_{W^{2,p}_{-\tau}}\mbox{ and }\|\hat f-f\|_{W^{2,p}_{-\tau}}
$$
arbitrarily small and so Lemma \ref{Lem: continuity mass} yields
$$
|m(M,\hat g_\lambda,f_\lambda,E_k)-m(M,g,f,E_k)|<\ve.
$$
The proof is now completed by taking $(g_\ve,f_\ve)$ to be $(\hat g_\lambda,f_\lambda)$.
\end{proof}

\section{Generalized Geroch conjecture}\label{Sec: Geroch}
\subsection{Generalized Geroch conjecture}
The Geroch conjecture asserts that the $n$-torus $\mathbb{T}^n$ admits no smooth metric with positive scalar curvature. In this subsection, we are going to prove a generalized version for Schoen-Yau-Schick (SYS) manifolds in the setting of $P$-scalar curvature.
Recall
\begin{defn}[SYS manifolds \cite{Gromov2018}]\label{Defn: SYS}
An orientable compact manifold $M^n$ with empty or non-emptry boundary $\partial M$ is said to be SYS if there exist $(n-2)$ cohomology $1$-classes $\beta_1,\ldots,\beta_{n-2}$ in $H^1(M,\partial M,\mathbb Z)$ such that the homology $2$-class
\begin{equation*}
[M,\partial M]\frown(\beta_1\smile\beta_2\smile\cdots\smile \beta_{n-2})\in H_2(M,\mathbb Z)
\end{equation*}
is non-spherical, i.e. any smooth representation cannot consist of $2$-spheres.
\end{defn}

Let $(M,g,f)$ be a weighted manifold with non-empty boundary $\partial M$ and $\nu$ is the outward unit normal of $\partial M$. We say that the boundary is {\it $f$-mean-convex} if $H_g-\mathrm df(\nu)>0$ on $\partial M$, where $H_g$ is the mean curvature of $\partial M$ with respect to $\nu$. The generalized Geroch conjecture is now stated as the following
\begin{prop}\label{Prop: Geroch}
Let $3\leq n\leq 7$. If $M^n$ is a SYS manifold, then it admits no smooth pair $(g,f)$ on $M$ such that the weighted manifold $(M,g,f)$ has positive P-scalar curvature and that the boundary $\partial M$ is $f$-mean-convex (when $\partial M$ is non-empty).
\end{prop}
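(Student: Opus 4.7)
The plan is an iterated weighted stable slicing argument, in the spirit of the Schoen-Yau descent and its $P$-scalar adaptation by Brendle-Hirsch-Johne. Assume for contradiction that a smooth pair $(g,f)$ exists on the SYS manifold $M^n$ with $P(g,f)>0$ and, if $\partial M\neq\emptyset$, boundary which is $f$-mean-convex; let $\beta_1,\ldots,\beta_{n-2}\in H^1(M,\partial M;\mathbb{Z})$ witness the SYS property.

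First I would produce the top slice. Picking a smooth oriented hypersurface $\Sigma_0$ Poincar\'e dual to $\beta_1$, I would minimize the weighted area $\mathcal A_f(\Sigma):=\int_\Sigma e^{-f}\,\mathrm{d}\sigma$ within the homology class $[\Sigma_0]\in H_{n-1}(M,\partial M;\mathbb{Z})$. The strict weighted mean-convexity $H_g-\mathrm{d}f(\nu)>0$ of $\partial M$ serves as a barrier: the weighted maximum principle keeps the minimizer $\Sigma_1$ away from $\partial M$, and, since $3\le n\le 7$, classical regularity for weighted area gives that $\Sigma_1$ is smooth and embedded.

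The core step is the dimension reduction to a lower-dimensional SYS manifold with positive $P$-scalar curvature. The minimizer $\Sigma_1$ is $f$-minimal ($H=f_\nu$) and weighted stable,
\[
\int_{\Sigma_1}|\nabla^{\Sigma_1}\phi|^2\,e^{-f}\,\mathrm{d}\sigma\ge\int_{\Sigma_1}\bigl(|A|^2+\Ric_g(\nu,\nu)+\Hess_g f(\nu,\nu)\bigr)\phi^2\,e^{-f}\,\mathrm{d}\sigma,
\]
and a direct Gauss-Codazzi identity combined with $H=f_\nu$ yields
\[
P(g,f)=P\!\left(g|_{\Sigma_1},f|_{\Sigma_1}\right)+2\bigl(\Ric_g+\Hess_g f\bigr)(\nu,\nu)+|A|^2-4f_\nu^2.
\]
Following \cite{BrendleHirschJohne2022}, a positive first eigenfunction $u>0$ of the weighted stability operator supports a conformal change of $g|_{\Sigma_1}$, together with a logarithmic correction of the weight $f|_{\Sigma_1}$ which absorbs the extra gradient/Hessian and $f_\nu^2$ terms, producing a new smooth pair $(\widetilde g_1,\widetilde f_1)$ on $\Sigma_1^{n-1}$ with strictly positive $P$-scalar curvature and $\widetilde f_1$-mean-convex boundary (if any). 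The pulled-back classes $\beta_2|_{\Sigma_1},\ldots,\beta_{n-2}|_{\Sigma_1}$ preserve the SYS property of $\Sigma_1$ by the standard intersection-theoretic bookkeeping of the Schoen-Yau descent.

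Iterating $n-2$ times then yields a closed oriented surface $\Sigma^2$ carrying a smooth pair $(\widetilde g,\widetilde f)$ of strictly positive $P$-scalar curvature whose components represent the non-spherical $2$-class. The conformal change $\widehat g:=e^{-2\widetilde f}\widetilde g$ has Gauss curvature
\[
K_{\widehat g}=\tfrac12 e^{2\widetilde f}\bigl(R_{\widetilde g}+2\Delta_{\widetilde g}\widetilde f\bigr)\ge\tfrac12 e^{2\widetilde f}P(\widetilde g,\widetilde f)>0,
\]
so Gauss-Bonnet forces every component of $\Sigma^2$ to be $S^2$, contradicting non-sphericality. The hard part is the dimension-reduction step: the extra first-order term $4\langle\mathrm{d}v,\mathrm{d}f\rangle_g$ that appears in the weighted conformal Laplacian and the $-4f_\nu^2$ remainder obstruct a na\"ive Fischer-Colbrie argument, and the case $n=3$, where the standard conformal factor degenerates, would require a $\mu$-bubble-type replacement as in \cite{BrendleHirschJohne2022}.
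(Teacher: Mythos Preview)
Your outline has the right skeleton (minimize weighted area in the dual class, use stability, descend, finish with Gauss--Bonnet), but the heart of the argument---the dimension-reduction step---is not actually supplied, and the way you describe it does not match what \cite{BrendleHirschJohne2022} do. You claim that ``a positive first eigenfunction $u>0$ of the weighted stability operator supports a conformal change of $g|_{\Sigma_1}$, together with a logarithmic correction of the weight $f|_{\Sigma_1}$'' producing a new pair $(\tilde g_1,\tilde f_1)$ on $\Sigma_1$ with $P(\tilde g_1,\tilde f_1)>0$. But no such construction is given in \cite{BrendleHirschJohne2022}, and if you try to write one down you run into trouble: with $w=\log u$ and $\tilde f=f|_\Sigma-cw$, the terms $\Delta_\Sigma w$, $|\nabla w|^2$, $\langle\nabla f,\nabla w\rangle$ never organize themselves into a pointwise positive quantity for any choice of $c$ and conformal factor. (Incidentally, your Gauss--Codazzi identity is off: with $H=f_\nu$ one gets $P(g,f)=P(g|_\Sigma,f|_\Sigma)+2(\Ric+\Hess f)(\nu,\nu)+|A|^2$, with no $-4f_\nu^2$ term.) The whole point of the Brendle--Hirsch--Johne weighted slicing is to \emph{avoid} a step-by-step Schoen--Yau style reduction, precisely because that reduction fails for the curvature conditions they treat; your remark that the $n=3$ case ``would require a $\mu$-bubble-type replacement as in \cite{BrendleHirschJohne2022}'' is likewise a misreading, since no $\mu$-bubbles appear there.

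What the paper actually does is follow the genuine \cite{BrendleHirschJohne2022} scheme, with one modification: set $\Sigma_0=M$, $\rho_0=e^{-f}$, and inductively let $\Sigma_k\subset\Sigma_{k-1}$ be a $\rho_{k-1}$-weighted area minimizer in the appropriate homology class, with $\rho_k=\rho_{k-1}\cdot\phi_k$ for $\phi_k$ the first eigenfunction of the weighted stability operator. One never asserts positive $P$-scalar curvature on the intermediate slices. The slicing is continued all the way to a $1$-dimensional $\Sigma_{n-1}$ (a closed $\rho_{n-2}$-weighted geodesic on the positive-genus surface $\Sigma_{n-2}$), and then a single integral inequality on $\Sigma_{n-1}$ (the analogue of \cite[Lemma 3.4]{BrendleHirschJohne2022}, with the extra term $-\Delta_{\Sigma_0}\log\rho_0$ coming from $\rho_0\neq 1$) is combined with a careful choice of Cauchy--Schwarz parameters to show the integrand is $\geq\tfrac12 P(g,f)>0$, a contradiction. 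The information from all levels is used simultaneously at the end rather than consumed one level at a time, and this is exactly what makes the argument go through.
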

The proof is inspired from the work \cite{BrendleHirschJohne2022} and let us recall
\begin{defn}(\cite[Definition 1.3]{BrendleHirschJohne2022})\label{Defn: weighted slicing}
Let $(M^n,g,f)$ be a weighted manifold and $m$ be an integer with $1\leq m\leq n-1$. A {\it stable weighted slicing of order $m$} consists of submanifolds $\Sigma_k$ and positive functions $\rho_k\in C^\infty(\Sigma_k)$, $0\leq k\leq m$, satisfying the following conditions:
\begin{enumerate}\setlength{\itemsep}{1mm}
\item [(a)]$\Sigma_0=M$ and $\rho_0=e^{-f}$;
\item [(b)] For each $1\leq k\leq m$, $\Sigma_k$ is an embedded two-sided hypersurface in $\Sigma_{k-1}$, which is a stable critical point of the $\rho_{k-1}$-weighted area given by
$$\mathcal{H}^{n-k}_{\rho_{k-1}}(\Sigma)=\int_{\Sigma} \rho_{k-1} \,\mathrm d\mathcal H^{n-k}_g.$$
\item[(c)] For each $1\leq k\leq m$, the function
$$\frac{\rho_k}{\rho_{k-1}|_{\Sigma_k}}\in C^\infty(\Sigma_k)$$ is a first eigenfunction of the stability operator associated with the $\rho_{k-1}$-weighted area with the first eigenvalue $\lambda_k\geq 0$.
\end{enumerate}
\end{defn}

\begin{rem}
In original \cite[Definition 1.3]{BrendleHirschJohne2022}, Brendle-Hirsch-Johne assume the underlying space to be an (unweighted) Riemannian manifold. While for our purpose, we have to modify the definition to allow the underlying space to be a weighted manifold.
\end{rem}

We have the following lemma from the same argument of \cite[Lemma 3.4]{BrendleHirschJohne2022}, where extra terms $\Delta_{\Sigma_{0}}\log\rho_{0}$ and $H_{\Sigma_{1}}^{2}$ in $\mathcal{E}$ arise due to the possibility $\rho_{0}\neq 1$ here.

\begin{lma}\label{lma:Stable-ineq}
For $m\geq 2$, we have
\begin{equation*}
\int_{\Sigma_m} \rho_{m-1}^{-1} \left(\Lambda+ \mathcal{R}+\mathcal{G}+\mathcal{E}-\Delta_{\Sigma_{0}}\log\rho_{0} \right) \mathrm{dvol}_{m} \leq 0,
\end{equation*}
where
\begin{equation}\label{Eq: quantities}
\left\{
\begin{array}{ll}
\displaystyle \Lambda=\sum_{k=1}^{m-1} \lambda_k;\\[5mm]
\displaystyle \mathcal{R}=\sum_{k=1}^m \Ric_{\Sigma_{k-1}}(\nu_k,\nu_k);\\[5mm]
\displaystyle \mathcal{G}=\sum_{k=1}^{m-1} \left\langle \nabla_{\Sigma_k}\log\rho_k,\nabla_{\Sigma_k}\log\left( \frac{\rho_k}{\rho_{k-1}|_{\Sigma_k}}\right) \right\rangle;\\[5mm]
\displaystyle \mathcal{E}= \sum_{k=1}^m |h_{\Sigma_k}|^2 -\sum_{k=1}^m H_{\Sigma_k}^2.
\end{array}
\right.
\end{equation}
\end{lma}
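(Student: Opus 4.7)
The plan is to iterate the weighted second-variation inequality along the slicing tower, adapting the argument of \cite[Lemma 3.4]{BrendleHirschJohne2022} to the setting where the bottom density $\rho_0=e^{-f}$ is no longer identically $1$. The strategy is to combine the $m$ stability inequalities, one for each slice $\Sigma_k \subset \Sigma_{k-1}$, exploiting the fact that $\varphi_k := \rho_k/(\rho_{k-1}|_{\Sigma_k})$ is a first eigenfunction with eigenvalue $\lambda_k \geq 0$ of the $\rho_{k-1}$-weighted Jacobi operator, and then telescope until only integrals over $\Sigma_m$ remain.

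First I would write down, for each $1 \leq k \leq m$, the weighted stability identity for $\Sigma_k$ in $\Sigma_{k-1}$ with density $\rho_{k-1}$, tested against $\varphi_k$. Because $\Sigma_k$ is a weighted critical point, its ordinary mean curvature equals $H_{\Sigma_k} = \partial_{\nu_k} \log \rho_{k-1}$; substituting this into a Gauss-type identity that converts the ambient Ricci contribution $\Ric_{\Sigma_{k-1}}(\nu_k,\nu_k)$ into intrinsic curvature plus $|h_{\Sigma_k}|^2 - H_{\Sigma_k}^2$ produces the $\mathcal{E}$ piece. The term $H_{\Sigma_1}^2$, which was absent in \cite{BrendleHirschJohne2022} (there $\rho_0 \equiv 1$ and $\Sigma_1$ was ordinarily minimal), now enters because $H_{\Sigma_1} = -\partial_{\nu_1} f$ is generally nonzero. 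Rewriting $\varphi_k = \rho_k \cdot (\rho_{k-1}|_{\Sigma_k})^{-1}$ and integrating by parts then produces the cross terms that assemble into $\mathcal{G}$, and the eigenvalue contributions $\lambda_k$ for $1 \leq k \leq m-1$.

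Summing the $m$ inequalities telescopically from $k=1$ to $k=m$, with each step passing the integrand from $\Sigma_{k-1}$ to $\Sigma_k$ via the stability identity, the desired estimate on $\Sigma_m$ emerges. The extra term $-\Delta_{\Sigma_0} \log \rho_0$ appears specifically at the base of the tower: integration by parts on $\Sigma_1$ against the non-trivial density $\rho_0 = e^{-f}$ generates a Laplacian of $\log \rho_0 = -f$ tangent to $\Sigma_0 = M$, a contribution that vanished in \cite{BrendleHirschJohne2022} and must now be carried all the way through the telescoping to land in the final integrand on $\Sigma_m$.

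The main technical obstacle I anticipate is the bookkeeping of the gradient cross-terms $\langle \nabla_{\Sigma_k}\log \rho_k, \nabla_{\Sigma_k} \log(\rho_k/\rho_{k-1}|_{\Sigma_k}) \rangle$ that arise from completing squares when $\varphi_k$ is written as a ratio of densities; these are the pieces that assemble precisely into $\mathcal{G}$, and tracking their signs and indices across adjacent slicing levels is the delicate combinatorial step. Apart from this, the derivation follows \cite[Lemma 3.4]{BrendleHirschJohne2022} essentially line by line, with $f$ entering only through the two new contributions $-H_{\Sigma_1}^2$ (collected into $\mathcal{E}$) and $-\Delta_{\Sigma_0} \log \rho_0$.
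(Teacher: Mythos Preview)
Your proposal is correct and matches the paper's approach exactly: the paper simply states that the proof is the same as \cite[Lemma 3.4]{BrendleHirschJohne2022} and omits the details, noting only that the extra terms $\Delta_{\Sigma_0}\log\rho_0$ and $H_{\Sigma_1}^2$ arise because $\rho_0\neq 1$, which is precisely what you identify. (One harmless sign slip: the weighted first variation gives $H_{\Sigma_k}=-\partial_{\nu_k}\log\rho_{k-1}$, hence $H_{\Sigma_1}=\partial_{\nu_1}f$, but since only $H_{\Sigma_1}^2$ enters $\mathcal{E}$ this does not affect the argument.)
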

\begin{proof}
The proof is the same as that of \cite[Lemma 3.4]{BrendleHirschJohne2022} and we omit the details.
\end{proof}

Now we are ready to prove the desired Proposition \ref{Prop: Geroch}.
\begin{proof}[Proof of Proposition \ref{Prop: Geroch}]
First let us show that there always exists a stable weighted slicing of order $(n-1)$ in a SYS weighted manifold $(M^n,g,f)$. We start by letting $\Sigma_0=M$ and $\rho_0=e^{-f}$. From Definition \ref{Defn: SYS}, there are $(n-2)$ cohomology $1$-classes $\beta_1,\ldots,\beta_{n-2}$ in $H^1(M,\partial M,\mathbb Z)$ such that the homology $2$-class
\begin{equation*}
[M,\partial M]\frown(\beta_1\smile\beta_2\smile\cdots\smile \beta_{n-2})\in H_2(M,\mathbb Z)
\end{equation*}
is non-spherical. In particular, we have
$$[M,\partial M]\frown \beta_1\neq 0\in H_{n-1}(M,\mathbb Z).$$
Consider the warped metric $g_{\mathrm{warp}}=g+\rho_0^2\,\mathrm d\theta^2$ on $M\times \mathbb S^1$. We point out the relation
$$\mathcal H^n(\Sigma\times \mathbb S^1,g_{\mathrm{warp}})=\int_\Sigma \rho_0\,\mathrm d\mathcal H^{n-1}_g$$
for any smooth hypersurface $\Sigma\subset M$. As a result, minimizing $\rho_0$-weighted area among hypersurfaces in $M$ is equivalent to minimizing area among $\mathbb S^1$-invariant hypersurfaces in $M\times \mathbb S^1$. Since the mean curvature of $\partial M\times \mathbb S^1$ in $(M\times \mathbb S^1,g_{\mathrm{warp}})$ with respect to outward unit normal satisfies
$$H_{g_{\mathrm{warp}}}=H_g-\mathrm df(\nu)>0,$$
it serves as a barrier for minimizing the area functional.
From the geometric measure theory, we can find a smoothly embedded $\rho_0$-weighted area-minimizing hypersurface $\tilde\Sigma_1$ with multiplicity such that $\tilde \Sigma_1$ is homologous to $[M,\partial M]\frown \beta_1$. Notice that there is at least one component $\Sigma_1$ of $\tilde \Sigma_1$ such that
\begin{equation}\label{Eq: non-spherical 1}
[\Sigma_1]\frown(\beta_2\smile\cdots\smile \beta_{n-2})\in H_2(M,\mathbb Z)
\end{equation}
is non-spherical. Take $\phi_1$ to be a first eigenfunction of the stability operator associated with the $\rho_{0}$-weighted area corresponding to the first eigenvalue $\lambda_1$. Clearly the $\rho_0$-weighted area-minimizing property of $\tilde \Sigma_1$ implies that $\Sigma_1$ is stable as a critical point of the $\rho_{0}$-weighted area and so $\lambda_1\geq 0$. Denote $\rho_1=\rho_0\phi_1$, then
$$
\mathcal S_1=\{(\Sigma_0,\rho_0),(\Sigma_1,\rho_1)\}
$$
is a weighted slicing of order $1$. Notice that the condition \eqref{Eq: non-spherical 1} allows us to extend the weighted slicing above by repeating above construction, and finally we end at a weighted slicing
$$
\mathcal S_{n-2}=\{(\Sigma_0,\rho_0),(\Sigma_1,\rho_1),\ldots,(\Sigma_{n-2},\rho_{n-2})\}
$$
of order $(n-2)$, where $\Sigma_{n-2}$ is an orientable closed surface with positive genus. In particular, we can find an embedded $\rho_{n-2}$-weighted length-minimizing closed geodesic $\Sigma_{n-1}$. As before, we pick up a first eigenfunction $\phi_{n-1}$ of the stability operator associated with the $\rho_{n-2}$-weighted length corresponding to the first eigenvalue $\lambda_{n-1}\geq 0$ and define $\rho_{n-1}=\rho_{n-2}\phi_{n-1}$. Then we find the desired weighted slicing
$$
\mathcal S_{n-1}=\{(\Sigma_0,\rho_0),(\Sigma_1,\rho_1),\ldots,(\Sigma_{n-2},\rho_{n-2}), (\Sigma_{n-1},\rho_{n-1})\}
$$
of order $(n-1)$.

Next we try to deduce a contradiction if there is a smooth pair $(g,f)$ on $M$ such that the weighted manifold $(M,g,f)$ has positive $P$-scalar curvature. Let $\mathcal S_{n-1}$ be the stable weighted slicing constructed above. Then Lemma \ref{lma:Stable-ineq} tells us
\begin{equation}\label{Eq: inequality}
\int_{\Sigma_{n-1}} \rho_{n-2}^{-1} \left(\Lambda+ \mathcal{R}+\mathcal{G}+\mathcal{E}-\Delta_{\Sigma_{0}}\log\rho_{0} \right) \mathrm{dvol}_{n-1} \leq 0,
\end{equation}
where $\Lambda$, $\mathcal{R}$, $\mathcal{G}$ and $\mathcal{E}$ are those quantities given in \eqref{Eq: quantities} with $m=n-1$. The contradiction comes from a delicate analysis on the integrand. For convenience, for any point $x$ in $\Sigma_{n-1}$ we take an orthonormal basis $\{e_{i}\}_{i=1}^{n}$ of $T_xM$ such that $e_i$ is the unit normal of $\Sigma_i$ at $x$ in $\Sigma_{i-1}$ for all $1\leq i\leq n-1$. Following the same computation as in \cite[Lemma 3.8 and Remark 3.9]{BrendleHirschJohne2022} we can obtain
$$
\Lambda+\mathcal{R}+\mathcal{E} \geq  \frac{1}{2}R(g)+\frac{1}{2}\sum_{k=1}^{n-1}\left(|h_{\Sigma_{k}}|^{2}-H_{\Sigma_{k}}^{2}\right)
= \frac{1}{2}R(g)+\frac{1}{2}\sum_{k=1}^{n-2}\left(|h_{\Sigma_{k}}|^{2}-H_{\Sigma_{k}}^{2}\right),
$$
where we used the relation $|h_{\Sigma_{n-1}}|^{2}=H_{\Sigma_{n-1}}^{2}$ since $\Sigma_{n-1}$ is one-dimensional. Notice that we have
\[
|h_{\Sigma_{k}}|^{2} \geq \frac{H_{\Sigma_{k}}^{2}}{n-k}
\]
and
$$H_{\Sigma_{k}}=-\langle\nabla_{\Sigma_{k-1}}\log\rho_{k-1},e_{k}\rangle.$$
It then follows that
$$
\Lambda+\mathcal{R}+\mathcal{E}\geq \frac{1}{2}R(g)-\sum_{k=1}^{n-2}\frac{n-k-1}{2(n-k)}\left|\langle\nabla_{\Sigma_{k-1}}\log\rho_{k-1},e_{k}\rangle\right|^{2}.
$$
Now we take $\mathcal G$ into consideration.
For any $\beta_{k}>0$, it follows from the Cauchy-Schwarz inequality that
\[
\begin{split}
\mathcal{G} = {} & \sum_{k=1}^{n-2} \left\langle \nabla_{\Sigma_k}\log\rho_k,\nabla_{\Sigma_k}\log\left( \frac{\rho_k}{\rho_{k-1}|_{\Sigma_k}}\right) \right\rangle \\
= {} & \sum_{k=1}^{n-2}|\nabla_{\Sigma_k}\log\rho_k|^{2}
-\sum_{k=1}^{n-2}\langle \nabla_{\Sigma_k}\log\rho_k,\nabla_{\Sigma_k}(\log\rho_{k-1}|_{\Sigma_k}) \rangle \\
\geq {} & \sum_{k=1}^{n-2}(1-\beta_{k})\left|\nabla_{\Sigma_k}\log\rho_k\right|^{2}
-\sum_{k=1}^{n-2}\frac{1}{4\beta_{k}}\left|\nabla_{\Sigma_k}(\log\rho_{k-1}|_{\Sigma_k})\right|^{2}.
\end{split}
\]
Collecting all these estimates, we conclude
\[
\begin{split}
& \Lambda+ \mathcal{R}+\mathcal{G}+\mathcal{E}-\Delta_{\Sigma_{0}}\log\rho_{0} \\
\geq {} & \frac{1}{2}R(g)-\Delta_{\Sigma_{0}}\log\rho_{0}-\sum_{k=1}^{n-2}\frac{n-k-1}{2(n-k)}\,\left|\langle\nabla_{\Sigma_{k-1}}\log\rho_{k-1},e_{k}\rangle\right|^{2} \\
& +\sum_{k=1}^{n-2}(1-\beta_{k})|\nabla_{\Sigma_k}\log\rho_k|^{2}
-\sum_{k=1}^{n-2}\frac{1}{4\beta_{k}}|\nabla_{\Sigma_k}(\log\rho_{k-1}|_{\Sigma_k})|^{2}.
\end{split}
\]
Taking
\[
\beta_{k} = \frac{n-k}{2(n-k-1)}
\]
and using the relation
\[
\left|\langle\nabla_{\Sigma_{k-1}}\log\rho_{k-1},e_{k}\rangle\right|^{2}+|\nabla_{\Sigma_k}(\log\rho_{k-1}|_{\Sigma_k})|^{2} = |\nabla_{\Sigma_k-1}\log\rho_{k-1}|^{2},
\]
we obtain
\[
\begin{split}
& \Lambda+ \mathcal{R}+\mathcal{G}+\mathcal{E}-\Delta_{\Sigma_{0}}\log\rho_{0} \\
\geq {} & \frac{1}{2}R(g)-\Delta_{\Sigma_{0}}\log\rho_{0}
-\sum_{k=1}^{n-2}\frac{1}{4\beta_k}\,\left|\nabla_{\Sigma_{k-1}}\log\rho_{k-1}\right|^{2} +\sum_{k=1}^{n-2}(1-\beta_{k})|\nabla_{\Sigma_k}\log\rho_k|^{2}.
\end{split}
\]
Since we have
$$
1-\beta_k=\frac{1}{4\beta_{k+1}}\mbox{ for }k=1,2,\ldots, n-3,
$$
after canceling terms in last two sums, we see that
\begin{equation}\label{Eq: key inequality}
\begin{split}
& \Lambda+ \mathcal{R}+\mathcal{G}+\mathcal{E}-\Delta_{\Sigma_{0}}\log\rho_{0} \\
\geq {} & \frac{1}{2}R(g)-\Delta_{\Sigma_{0}}\log\rho_{0}-\frac{n-2}{2(n-1)}\left|\nabla_{\Sigma_{0}}\log\rho_{0}\right|^{2}\geq \frac{1}{2}P(g,f)>0.
\end{split}
\end{equation}
This contradicts to \eqref{Eq: inequality} and we complete the proof.
\end{proof}
\subsection{From quasi-positivity to global positivity} In the following, we say that a weighted manifold $(M,g,f)$ has {\it quasi-positive} $P$-scalar curvature if the $P$-scalar curvature is non-negative everywhere and is positive at some point. We establish the following deformation result to extend the positivity of $P$-scalar curvature from one point to the entire manifold.

\begin{prop}\label{Prop: spread}
If $(M,g, f)$ is a closed weighted manifold with quasi-positive $P$-scalar curvature and $f$-mean-convex boundary, then there is a new smooth metric $\tilde g$ on $M$ such that the weighted manifold $(M,\tilde g,f)$ has positive $P$-scalar curvature and $f$-mean-convex boundary.
\end{prop}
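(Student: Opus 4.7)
The plan is to obtain $\tilde g$ via a conformal deformation $\tilde g=u^s g$, where $u>0$ is the first eigenfunction of the weighted conformal Laplacian. Tracing through the computation behind \eqref{DT}, one sees that the conformal $P$-scalar curvature is governed by
$$
L_{g,f}u := -(n-1)s\Delta_g u + 4\langle \mathrm du,\mathrm df\rangle_g + P(g,f)u,\qquad P(u^s g,f) = u^{-s-1}L_{g,f}u.
$$
Since $s=4/(n-2)$ yields $\tfrac{4}{(n-1)s}=\tfrac{n-2}{n-1}$, one can rewrite
$$
L_{g,f}u = -(n-1)s\,e^{\frac{n-2}{n-1}f}\Div_g\!\left(e^{-\frac{n-2}{n-1}f}\nabla u\right) + P(g,f)u,
$$
so that $L_{g,f}$ is, up to the positive factor $e^{\frac{n-2}{n-1}f}$, self-adjoint with respect to the weighted measure $\mathrm d\mu := e^{-\frac{n-2}{n-1}f}\mathrm{dvol}_g$.

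With this in hand, I would consider the Neumann eigenvalue problem
$$
L_{g,f}u = \lambda u \text{ on } M,\qquad \partial_\nu u=0 \text{ on }\partial M
$$
(no boundary condition being imposed if $\partial M=\emptyset$). Standard spectral theory on the compact manifold produces a first eigenvalue
$$
\lambda_1 = \inf_{u\in H^1(M),\,u\not\equiv 0}\frac{\int_M\!\big[(n-1)s\,|\nabla u|_g^2 + P(g,f)u^2\big]\,\mathrm d\mu}{\int_M u^2\,\mathrm d\mu},
$$
realized by a smooth positive eigenfunction $u_1>0$. Quasi-positivity of $P(g,f)$ forces $\lambda_1>0$: both summands in the numerator are non-negative, and their joint vanishing would force $u$ constant with $P(g,f)u^2\equiv 0$, contradicting $P(g,f)>0$ at some point.

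Finally, setting $\tilde g := u_1^s g$ gives $P(\tilde g,f)=u_1^{-s-1}L_{g,f}u_1 = \lambda_1 u_1^{-s}>0$ throughout $M$. For the boundary, the standard conformal transformation law combined with $\tilde\nu = u_1^{-s/2}\nu$ and the Neumann condition $\partial_\nu u_1=0$ yields
$$
\tilde H_{\tilde g} - \mathrm df(\tilde\nu) = u_1^{-s/2}\bigl(H_g - \mathrm df(\nu)\bigr) > 0,
$$
so $(M,\tilde g,f)$ still has $f$-mean-convex boundary. The only delicate point is reconciling the improvement of bulk curvature with the preservation of $f$-mean-convexity: the Neumann boundary condition is tailored for this, as it kills precisely the extra first-order term appearing in the conformal formula for $H_g-\mathrm df(\nu)$, after which the argument reduces to standard self-adjoint spectral theory on a compact manifold.
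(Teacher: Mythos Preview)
Your proof is correct, and it reaches the goal by a route that is cleaner than, though closely related to, the paper's. The paper does not use the first eigenfunction. Instead it shows that the linearized operator
$$
\tilde A(v)=\Bigl(-(n-1)s\Delta_g v + 4\langle \mathrm dv,\mathrm df\rangle_g + P(g,f)v,\ \partial_\nu v\Bigr)
$$
has trivial kernel (strong maximum principle plus quasi-positivity of $P(g,f)$), applies the Fredholm alternative to solve $\tilde A(v_0)=(1,0)$, and then takes $\tilde g=(1+tv_0)^s g$ for small $t>0$; the Neumann condition on $v_0$ is used exactly as in your argument to preserve $f$-mean-convexity.

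The key difference is that you recognize the hidden self-adjointness of $L_{g,f}$ with respect to the weighted measure $e^{-\frac{n-2}{n-1}f}\mathrm{dvol}_g$, which lets you use the first Neumann eigenfunction as the conformal factor in a single step and gives the explicit formula $P(\tilde g,f)=\lambda_1 u_1^{-s}$. Both arguments hinge on the same dichotomy---a function annihilated by $L_{g,f}$ with Neumann data must be constant, and quasi-positivity then forces it to vanish---so the underlying mechanism is the same. Your variational packaging avoids the perturbative step; the paper's approach has the mild advantage that it does not need to exhibit the symmetrizing weight, only invertibility of the linearization, which comes directly from the maximum principle.
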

\begin{proof}
Let us consider the operator
$$
\tilde T_{g,f}:C^{2,\alpha}(M)\to C^\alpha(M)\times C^{1,\alpha}(\partial M),\,u\mapsto \left(u^{s+1} \cdot T_{g,f}(u),\frac{\partial u}{\partial \nu}\right),
$$
where $T_{g,f}$ is the operator given by $T_{g,f}(u)=P(u^s g,f)$. From Lemma \ref{Lem: linearization}, we can compute
\[
D\tilde T_{g,f}|_{u=1}(v)=\left(-(n-1)s\Delta_g v +4\langle\mathrm{d}v,\mathrm{d}f\rangle_g+P(g,f)v,\frac{\partial v}{\partial\nu}\right).
\]

Let us denote $\tilde A=D\tilde T_{g,f}|_{u=1}$ for short and show that the operator $\tilde A$ is surjective. From the Fredholm alternative (refer to \cite[Theorem 2.29]{Lieberman2013}), we just need to show $\mathrm{Ker}\tilde A=0$. To see this, we take a function $\phi\in C^{2,\alpha}(M)$ such that
$\tilde A(\phi)=(0,0).$ It follows from the strong maximum principle that $\phi$ is a constant function. Combining with the fact that $P(g,f)$ is quasi-positive, we see that $\phi$ must be identical to zero and so $\tilde A$ is surjective.

It follows that we are able to take a function $v_0\in C^{2,\alpha}(M)$ such that
$\tilde A(v_0)=(1,0)$. Since $(g,f)$ is a smooth pair, we can conclude the smoothness of $v_0$ from the Schauder estimates. Notice that we have
$$
\left.\frac{\mathrm d}{\mathrm dt}\right|_{t=0}(1+tv_0)^{s+1}P(g_t,f)=1\mbox{ with }g_t=(1+tv_0)^sg.$$
This yields $P(g_t,f)>0$ for sufficiently small $t$. Using $\frac{\de v_{0}}{\de\nu}=0$, it is also not difficult to verify
$$H_{g_t}-\mathrm df(\nu_t)=(1+tv_0)^{-\frac{s}{2}}(H_g-\mathrm df(\nu))>0.$$
So we complete the proof.
\end{proof}
Combined with Proposition \ref{Prop: Geroch} we have the following
\begin{cor}\label{Cor: Geroch}
Let $3\leq n\leq 7$. If $M^n$ is a SYS manifold, then it admits no smooth pair $(g,f)$ on $M$ such that the weighted manifold $(M,g,f)$ has quasi-positive P-scalar curvature and that the boundary $\partial M$ is $f$-mean-convex (when $\partial M$ is non-empty).
\end{cor}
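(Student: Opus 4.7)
The plan is essentially to combine the two preceding results: Proposition \ref{Prop: spread} upgrades quasi-positivity of $P$-scalar curvature to strict positivity, and then Proposition \ref{Prop: Geroch} forbids the existence of such a metric on a SYS manifold. So this is a very short deduction, with essentially no new analytical work required.

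In detail, I would argue by contradiction. Suppose $(M,g,f)$ is a smooth pair on a SYS manifold $M^n$ such that $P(g,f)\geq 0$ on $M$ with strict inequality at some point, and such that $H_g-\mathrm df(\nu)>0$ on $\partial M$ (vacuously when $\partial M=\varnothing$). By Proposition \ref{Prop: spread} applied to $(M,g,f)$, there exists a new smooth metric $\tilde g$ on $M$ (produced as $\tilde g=(1+tv_0)^sg$ for sufficiently small $t>0$, where $v_0$ solves $\tilde A(v_0)=(1,0)$) such that the weighted manifold $(M,\tilde g,f)$ has strictly positive $P$-scalar curvature and $f$-mean-convex boundary. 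But then Proposition \ref{Prop: Geroch} applied to $(M,\tilde g,f)$ yields a contradiction, since a SYS manifold cannot carry any smooth pair with strictly positive $P$-scalar curvature and $f$-mean-convex boundary. This proves the corollary.

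There is really no obstacle: both input propositions have already been established, and the only subtlety is checking that the hypotheses match. In particular, one should observe that Proposition \ref{Prop: spread} is stated with the hypothesis ``closed weighted manifold with quasi-positive $P$-scalar curvature and $f$-mean-convex boundary,'' which is exactly the data we have here (the word ``closed'' should be read as ``compact,'' allowing nonempty boundary, since the conclusion about the boundary would otherwise be vacuous), and that the deformation in Proposition \ref{Prop: spread} does not alter the function $f$ — so if one had upgraded $f$ in addition to $g$, one might worry about the SYS condition being topological and hence preserved automatically, but since only $g$ changes and the SYS property depends only on the underlying manifold $M$, no such issue arises. Thus the chain of implications is clean and the corollary follows immediately.
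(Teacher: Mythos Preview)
Your proposal is correct and matches the paper's approach exactly: the paper introduces the corollary with the phrase ``Combined with Proposition \ref{Prop: Geroch} we have the following'' and gives no further proof, so the intended argument is precisely the two-step deduction (Proposition \ref{Prop: spread} then Proposition \ref{Prop: Geroch}) that you spell out. Your side remarks about the word ``closed'' and the fact that only $g$ is deformed are accurate clarifications of the paper's slightly informal statement.
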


\section{Proof of Theorem \ref{Thm: main}}\label{Sec: Proof}
\subsection{The mass inequality}
We are ready to prove the mass inequality \eqref{Eq: mass inequality}.
\begin{proof}[Proof of the mass inequality \eqref{Eq: mass inequality}]
We argue by contradiction. Suppose that $(M^n,g,f)$ is an AF weighted manifolds of $(p,\tau)$-type with non-negative $P$-scalar curvature, which has negative mass on some end $E_-$.
Fix a positive $\ve$ with
$$\ve<|m(M,g,f,E_-)|.$$
From Proposition \ref{Density}, we can find a new smooth pair $(g_\ve,f_\ve)$ such that
\begin{itemize}\setlength{\itemsep}{1mm}
    \item $(M,g_\ve,f_\ve)$ is an AF weighted manifold of $(p,\tau)$-type;
    \item the mass of the end $E_-$ satisfies $m(M,g_\ve,f_\ve,E_-)<0$;
    \item for end $E_-$ there is a constant $r_-$ such that
$$
g_\ve=u_\ve^s g_{\mathbb E}\mbox{ and }f_\ve\equiv 0\mbox{ when }|x|\geq r_-,
$$
where $u_\ve$ is a harmonic function with the expression
$$
u_\ve=1+\frac{A_\ve}{r^{n-2}}+O_{2}(r^{1-n}).
$$
\end{itemize}
From \eqref{equivalent formula weighted mass} and a direct computation, we have
\[
\begin{split}
m(M,g_\ve,f_\ve, E_-)
= {} & \lim_{\rho\to \infty} \int_{S_\rho} \big(\partial_j(g_{\ve})_{ij}-\partial_i(g_{\ve})_{jj}+2\partial_i f_{\ve}\big) \,\nu^i \, \mathrm{d}\sigma \\
= {} & \lim_{\rho\to \infty} \int_{S_\rho}(\partial_iu_\ve^s-n\partial_iu_\ve^s)\,\nu^i \, \mathrm{d}\sigma \\[2mm]
= {} & (1-n)(2-n)\omega_{n-1} s A_\ve,
\end{split}
\]
which implies
$$A_\ve=\frac{m(M,g_\ve,f_\ve,E_-)}{4(n-1)\omega_{n-1}}<0.$$
Applying the Lohkamp compactification trick from \cite{Lohkamp1999}, we can close the end $E_-$ by $\mathbb T^n$. Namely, we can deform the metric $g_\ve$ to be Euclidean around the infinity of $E_-$ such that the $P$-scalar curvature becomes quasi-positive. Then we take a large cube in the end $E_-$ and glue opposite faces. For those ends other than $E_-$, we are able to cut away them along large mean-convex coordinate spheres outside the support of $f_\ve$.

Now we obtain a compact weighted manifold $(\hat M,\hat g,\hat f)$ with quasi-positive $P$-scalar curvature and $\hat f$-mean-convex boundary, where $\hat M$ has the form $\mathbb T^n\# N$ for some compact manifold $N$. It follows from Corollary \ref{Cor: Geroch} that we can deduce a contradiction by showing
\begin{claim}
    $\hat M=\mathbb T^n\#N$ is a SYS manifold.
\end{claim}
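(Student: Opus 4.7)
The plan is to exhibit the required cohomology classes on $\hat M=\mathbb{T}^n\# N$ by pulling back the standard $1$-classes of $\mathbb{T}^n$ along a degree-$1$ collapse map, and then use naturality of the cap product together with $\pi_2(\mathbb{T}^n)=0$ to rule out sphericality. More precisely, let $B\subset \mathbb{T}^n$ be a small open ball used in the connected-sum construction, so that $\hat M$ contains a distinguished piece identified with $\mathbb{T}^n\setminus B$ glued along $\partial B$ to $N\setminus B'$ (where $B'$ is a small ball in $N$). Define
\[
\pi: \hat M\longrightarrow \mathbb{T}^n
\]
to be the identity on $\mathbb{T}^n\setminus B\subset \hat M$ and to send the entire complementary piece (i.e.\ $N$ minus the gluing ball, which contains all of $\partial \hat M$) to a single point inside $B$. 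This is a well-defined continuous map of degree one which, crucially, collapses $\partial\hat M$ to a point; in particular it descends to a map of pairs $\pi:(\hat M,\partial\hat M)\to (\mathbb{T}^n,\mathrm{pt})$, so that $\pi_\ast[\hat M,\partial\hat M]=[\mathbb{T}^n]\in H_n(\mathbb{T}^n,\mathbb{Z})$.

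Next, let $\gamma_1,\ldots,\gamma_{n-2}\in H^1(\mathbb{T}^n,\mathbb{Z})=H^1(\mathbb{T}^n,\mathrm{pt},\mathbb{Z})$ be the first $(n-2)$ standard generators dual to the coordinate circles, and set
\[
\beta_i:=\pi^\ast\gamma_i\in H^1(\hat M,\partial\hat M,\mathbb{Z}),\qquad i=1,\ldots,n-2.
\]
By naturality of the relative cup and cap products,
\[
\pi_\ast\bigl([\hat M,\partial\hat M]\frown (\beta_1\smile\cdots\smile\beta_{n-2})\bigr)
= [\mathbb{T}^n]\frown(\gamma_1\smile\cdots\smile\gamma_{n-2})=[\mathbb{T}^2],
\]
where $\mathbb{T}^2\subset \mathbb{T}^n$ is the sub-torus Poincar\'e-dual to $\gamma_1\smile\cdots\smile\gamma_{n-2}$, which represents a nonzero class in $H_2(\mathbb{T}^n,\mathbb{Z})$.

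Finally, suppose for contradiction that the class $\alpha:=[\hat M,\partial\hat M]\frown(\beta_1\smile\cdots\smile\beta_{n-2})\in H_2(\hat M,\mathbb{Z})$ admits a smooth representation by a disjoint union of $2$-spheres. Then $\pi_\ast\alpha$ is represented in $\mathbb{T}^n$ by a union of continuous maps $S^2\to \mathbb{T}^n$. Since $\pi_2(\mathbb{T}^n)=0$, each such sphere is null-homotopic, hence null-homologous, forcing $\pi_\ast\alpha=0$ and contradicting $\pi_\ast\alpha=[\mathbb{T}^2]\neq 0$. Therefore $\alpha$ is non-spherical, so $\hat M$ satisfies Definition \ref{Defn: SYS} and is SYS.

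The main point that requires care is the construction of $\pi$ as a map of pairs and the verification that the pullback $\pi^\ast$ really lands in $H^1(\hat M,\partial\hat M,\mathbb{Z})$ rather than only in $H^1(\hat M,\mathbb{Z})$; the choice of collapsing the entire $N$-side (which contains $\partial\hat M$) to a single basepoint makes this automatic. Everything else is standard naturality plus the vanishing $\pi_2(\mathbb{T}^n)=0$.
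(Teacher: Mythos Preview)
Your proposal is correct and follows essentially the same route as the paper: construct a degree-one collapse map $(\hat M,\partial\hat M)\to(\mathbb T^n,\mathrm{pt})$, pull back the standard $1$-classes (using $H^1(\mathbb T^n,\mathrm{pt})\cong H^1(\mathbb T^n)$, which the paper verifies via the long exact sequence of the pair), and push the cap product forward to $[\mathbb T^2]$ by naturality. The only cosmetic difference is the last step: you invoke $\pi_2(\mathbb T^n)=0$ directly to kill any spherical representative, whereas the paper composes with the projection $\mathbb T^n\to\mathbb T^2$ and observes that a union of $2$-spheres cannot map with degree one onto $\mathbb T^2$; both arguments encode the same obstruction.
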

Clearly there is a degree-one map $F:(\hat M,\partial\hat M)\to (\mathbb T^n,\mathrm{pt})$. For the $n$-torus $\mathbb T^n$, we can find $(n-2)$ cohomology $1$-classes $\beta_1,\beta_2\ldots,\beta_{n-2}$ such that
$$[\mathbb T^n]\frown(\beta_1\smile\beta_2\smile\cdots \smile \beta_{n-2})=[\mathbb T^2].$$
Consider the exact sequence
$$H^0(\mathbb T^n,\mathbb Z)\to H^0(\mathrm{pt},\mathbb Z)\to H^1(\mathbb T^n,\mathrm{pt},\mathbb Z)\to H^1(\mathbb T^n,\mathbb Z)\to H^1(\mathrm{pt},\mathbb Z).$$
Since the map $H^0(\mathbb T^n,\mathbb Z)\to H^0(\mathrm{pt},\mathbb Z)$ is surjective and $H^1(\mathrm{pt},\mathbb Z)=0$, we see that $H^1(\mathbb T^n,\mathrm{pt},\mathbb Z)$ is isomorphic to $H^1(\mathbb T^n,\mathbb Z)$. In particular, we can find cohomology $1$-classes $\bar\beta_1,\bar\beta_2,\ldots,\bar\beta_{n-2}$ in $H^1(\mathbb T^n,\mathrm{pt},\mathbb Z)$ such that $\id^*\bar\beta_i=\beta_i$.

Denote $\hat\beta_i=F^*\bar\beta_i$. Let us show that the homology $2$-class
$$\sigma=[\hat M,\partial\hat M]\frown(\hat\beta_1\smile\hat\beta_2\smile\cdots\smile\hat\beta_{n-2})$$
is non-spherical. To see this, we compute
\[
\begin{split}
F_*\sigma
= {} & (\deg F)\cdot[\mathbb T^n,\mathrm{pt}]\frown(\bar\beta_1\smile\bar\beta_2\smile\cdots\smile\bar\beta_{n-2})\\
= {} & (\deg F)\cdot\id_*\left([\mathbb T^n]\frown(\beta_1\smile\beta_2\smile\cdots\smile\beta_{n-2})\right)\\
= {} & (\deg F)\cdot[\mathbb T^2]=[\mathbb T^2].
\end{split}
\]
Therefore, any smooth representation $\Sigma$ of $\sigma$ admits a degree-one map $\pi\circ F:\Sigma \to \mathbb T^2$, where $\pi:\mathbb T^n\to\mathbb T^2$ is the projection map. This guarantees that $\Sigma$ cannot consist of $2$-spheres.
\end{proof}

\subsection{Rigidity} In this subsection, we assume that there is an end $E_0$ with $$m(M,g,f,E_0)=0.$$
In the followiing, we show that $(M,g,f)$ is isometric to $(\mathbb R^n,g_{\mathbb E},0)$.
\begin{proof}[Proof of the rigidity]
The proof will be divided into two steps:

\bigskip
\noindent
{\bf Step 1.} $P(g,f)$ vanishes everywhere.
\bigskip

Otherwise we can find a point $x_0\in M$ such that $P(g,f)(x_0)>0$. Our idea is using the conformal deformation to decrease the positive $P$-scalar curvature such that the mass of the end $E_0$ becomes negative. Then we can deduce a contradiction from the mass inequality \eqref{Eq: mass inequality}.

Given a smooth positive function $u$ on $M$, it is direct to compute
$$
P(u^sg,f)=u^{-(1+s)}\big(-(n-1)s\Delta_g u+4\langle\mathrm du,\mathrm df\rangle_g+P(g,f)u\big).
$$
Consider the operator
$$
\hat T_{g,f}:W^{2,p}_{-\tau}\to L^p_{-\tau-2},\,v\mapsto -(n-1)s\Delta_g v+4\langle\mathrm dv,\mathrm df\rangle_g+P(g,f)v.
$$
From Theorem \ref{Fredholm}, we know that $\hat T_{g,f}$ is a Fredholm operator. Since the Fredholm index is homotopy invariant, we conclude that the Fredholm index of $\hat T_{g,f}$ equals to the Fredholm index of $\Delta_g$, which is zero since $\tau\in(\frac{n-2}{2},n-2)$ (see \cite[Proposition 2.2]{Bartnik1986}). We point out that the operator $\hat T_{g,f}$ actually has a trivial kernel. To see this, we take a function $v_0\in W^{2,p}_{-\tau}$ such that $\hat T_{g,f}(v_0)=0$. It follows from the Sobolev embedding and Schauder estimates that $v_0$ is a smooth function. From the strong maximum principle and the quasi-positivity of $P(g,f)$, we conclude $v_0\equiv 0$. This guarantees the existence of a solution $v_\eta\in W^{2,p}_{-\tau}$ of the equation
$$
\hat T_{g,f}v_\eta=-\eta P(g,f),
$$
where $\eta:M\to[0,1]$ is a prescribed smooth non-negative cut-off function with $\eta(x_0)>0$ such that $\supp\,\eta$ is compactly supported in $\{P(g,f)>0\}$. Since $v_\eta$ approaches zero at the infinity of each end, we have $-1<v_\eta<0$ from the comparison principle and in particular $u_\eta:=1+v_\eta$ is positive. It follows from the proof of \cite[Theorem 1.17]{Bartnik1986} that on end $E_0$ we have
\begin{equation}\label{Eq: expansion}
v_\eta=c_\eta |x|^{2-n}+w_\eta\mbox{ with }w_\eta\in W^{2,p}_{2-n-\theta} \mbox{ for any } \theta\in(0,1).
\end{equation}
From the Sobolev embedding, we see $w_\eta \in W^{1,\infty}_{2-n-\theta}$. Let
$$g_\eta=u_\eta^s g$$ and as in the proof of Proposition \ref{Density} it is not difficult to verify that $(M,g_\eta,f)$ is still an AF weighted manifold of $(p,\tau)$-type with a possibly smaller $\tau$, which has non-negative $P$-scalar curvature.

Now we want to compute the mass change under the conformal deformation. Notice that we have
\[
\begin{split}
&\int_{S_\rho} \big (\partial_j(u_\eta^sg_{ij})-\partial_i(u_\eta^sg_{jj})+2\partial_if\big)\, \nu^i \, \mathrm{d}\sigma\\
=&\int_{S_\rho} \big (u_\eta^s(\partial_jg_{ij}-\partial_ig_{jj})+2\partial_if\big) \,\nu^i \, \mathrm{d}\sigma+(1-n)\int_{S_\rho}(\partial_i u_{\eta}^s)\,\nu^i\, \mathrm{d}\sigma\\
=&\int_{S_\rho} \big (\partial_jg_{ij}-\partial_ig_{jj}+2\partial_if\big) \,\nu^i \, \mathrm{d}\sigma+O(\rho^{-\tau})+(1-n)\int_{S_\rho}(\partial_i u_{\eta}^s)\,\nu^i\,\mathrm{d}\sigma
\end{split}
\]
and
\[
\begin{split}
\int_{S_\rho}(\partial_i u_{\eta}^s)\, \nu^i\, \mathrm{d}\sigma&=\int_{S_\rho} su_\eta^{s-1}\big((2-n)c_\eta\rho^{1-n}+(\partial_iw_\eta)\nu^i\big)\,\mathrm{d}\sigma\\
&=-4\omega_{n-1}\cdot c_\eta+O(\rho^{2-n})+O(\rho^{-\theta}).
\end{split}
\]
By \eqref{equivalent formula weighted mass}, we have
$$
m(M,g_\eta,f,E_0)=m(M,g,f,E_0)+4(n-1)\omega_{n-1}c_\eta.
$$

From the mass inequality \eqref{Eq: mass inequality}, it remains to show $c_\eta<0$ for a contradiction. Let us do this by constructing a barrier function for comparison. We point out that there is a $r_0>0$ such that $\hat T_{g,f}v_\eta=0$ when $|x|\geq r_0$. Let
$$\bar v=|x|^{2-n-\frac{\theta}{2}}.$$
A direct computation shows
$$
\hat T_{g,f}\bar v=\frac{2\theta(n-1)}{n-2}\left(2-n-\frac{\theta}{2}\right)|x|^{-n-\frac{\theta}{2}}+O\left(|x|^{-n-\frac{\theta}{2}-\tau}\right)+O\left(|x|^{-n-\frac{\theta}{2}-2\tau}\right).
$$
After possibly increasing the value of $r_0$ we obtain $\hat T_{g,f}\bar v\leq 0$ when $|x|\geq r_0$. From the comparison principle, it is easy to see
$$
v_{\eta}(x)\leq \left(\sup_{S_{r_0}}v_{\eta}\right)\left(\frac{|x|}{r_0}\right)^{2-n-\frac{\theta}{2}} \mbox{ for all }|x|\geq r_0.
$$
Combining this with \eqref{Eq: expansion}, we have
$$
c_\eta\leq \left(\sup_{S_{r_0}}v_{\eta}\right) r_0^{n-2+\frac{\theta}{2}}|x|^{-\frac{\theta}{2}}+O\big(|x|^{-\theta}\big).
$$
By taking $|x|$ large enough, we obtain $c_\eta<0$ as desired.

\bigskip
\noindent
{\bf Step 2.} $(M,g,f)$ is isometric $(\mathbb R^n,g_{\mathbb E},0)$.
\bigskip

Let us show $f\equiv 0$, whose argument depends on an extension of the vanishing property of $P$-scalar curvature to $P_n$-scalar curvature. For a weighted manifold $(M,g,f)$ we introduce the $P_n$-scalar curvature given by
$$
P_n(g,f)=R(g)+2\Delta_g f-\frac{n-2}{n-1}|\mathrm df|^2_g.
$$
The argument from Section \ref{Sec: Density} and \ref{Sec: Geroch} actually yields that if $(M^n,g,f)$ is an AF weighted manifold with non-negative $P_n$-scalar curvature, then for each end $E$ we have $m(M,g,f,E)\geq 0$. To see this one just need to notice
\begin{itemize}\setlength{\itemsep}{1mm}
\item the reduction from the mass inequality to the generalized Geroch conjecture does not depend on the prescribed coefficient of the term $|\mathrm df|^2_g$;
\item the key for the validity of the generalized Geroch conjecture is the inequality \eqref{Eq: key inequality}, where we have the requirement that the prescribed coefficient of the term $|\mathrm df|^2_g$ is no less than $\frac{n-2}{n-1}$.
\end{itemize}
Combining the mass inequality and the argument in Step 1, we are able to prove $P_n(g,f)\equiv 0$. As a result, we obtain
$$
|\mathrm df|_g^2=(n-1)\big(P_n(g,f)-P(g,f)\big)=0\mbox{ and so }f\equiv 0.
$$

Now we see that $(M,g)$ is an asymptotically flat manifold of Sobolev type $(p,\tau)$ defined in \cite{LeeLesourdUnger2022} with non-negative scalar curvature and zero mass on $E_0$. It follows from \cite[Theorem 1.5]{LeeLesourdUnger2022} that $(M,g)$ is isometric to the Euclidean space and we complete the proof for rigidity.
\end{proof}

\appendix
\section{Weighted analysis}\label{Sec: Appendix A}
In this appendix, we include some basic notions and results on weighted spaces and weighted analysis.

\subsection{Weighted Sobolev spaces}
Let $B_{1}\subset\mathbb{R}^{n}$ be the unit ball. The weighted Sobolev space on $\mathbb R^n\setminus B_1$ is defined as follows.

\begin{defn}
\label{weighted Sobolev space}
For $k\in\mathbb{Z}_{\geq0}$, $p\geq1$ and $\tau\in\mathbb{R}$, the weighted Sobolev space on $\mathbb R^n\setminus B_1$ is defined to be
\[
W_{-\tau}^{k,p}(\mathbb{R}^{n}\setminus B_{1}) := \left\{u\in W_{\mathrm{loc}}^{k,p}(\mathbb{R}^{n}\setminus B_{1}):
\|u\|_{W_{-\tau}^{k,p}(\mathbb{R}^{n}\setminus B_{1})} < \infty \right\},
\]
where
\[
\|u\|_{W_{-\tau}^{k,p}(\mathbb{R}^{n}\setminus B_{1})} := \sum_{0\leq |I|\leq k}
\left(\int_{\mathbb{R}^{n}\setminus B_{1}}\left(|\partial^{I}u|\cdot|x|^{|I|+\tau}\right)^{p}|x|^{-n}\,\mathrm dx\right)^{\frac{1}{p}}.
\]
When $k=0$, we usually use $L_{-\tau}^{p}(\mathbb R^n\setminus B_1)$ instead of $W_{-\tau}^{0,p}(\mathbb R^n\setminus B_1)$.
\end{defn}

Let $M$ be a differentiable manifold such that there is a compact subset $K\subset M$ such that the complement $M\setminus K$ consists of finitely many ends $E_1, E_2,\ldots, E_N$ and each end $E_k$ admits a diffeomorohism $\Phi_k:E_k\to \mathbb{R}^{n}\setminus B_{1}$. From the compactness of $K$, we also fix finitely many coordinate charts $\{U_i\}_{i=1}^{N'}$ to cover $K$. With respect to these coordinate charts, we can define weighted Sobolev spaces on $M$ as follows.
\begin{defn}
The weighted Sobolev space $W^{k,p}_{-\tau}$ on $M$ is defined to be
\[
W_{-\tau}^{k,p}:= \left\{ u\in W_{\mathrm{loc}}^{k,p}(M):
\sum_{k=1}^N\|u\|_{W_{-\tau}^{k,p}(E_k)}+\sum_{i=1}^{N'}\|u\|_{W^{k,p}(U_i)} < \infty \right\}.
\]
For any $u$ in $W^{k,p}_{\tau}$, we denote
$$
\|u\|_{W^{k,p}_{-\tau}}=\sum_{k=1}^N\|u\|_{W_{-\tau}^{k,p}(E_k)}+\sum_{i=1}^{N'}\|u\|_{W^{k,p}(U_i)}.
$$
\end{defn}

\begin{thm}[Theorem 1.2 of \cite{Bartnik1986}]\label{weighted inequalities}
We have the following inequalities.
\begin{enumerate}
\item[(i)] If $1\leq p\leq q\leq\infty$, $\tau_{1}<\tau_{2}$ and $u\in L_{-\tau_{2}}^{q}$, then $u\in L_{-\tau_{1}}^{p}$ and
\[
\|u\|_{L_{-\tau_{1}}^{p}} \leq C\|u\|_{L_{-\tau_{2}}^{q}}.
\]

\item[(ii)] If $u\in L_{-\tau_{1}}^{p}$, $v\in L_{-\tau_{2}}^{q}$, $\tau=\tau_{1}+\tau_{2}$ and $\frac{1}{r}=\frac{1}{p}+\frac{1}{q}$, then
\[
\|uv\|_{L_{-\tau}^{r}} \leq \|u\|_{L_{-\tau_{1}}^{p}} \|v\|_{L_{-\tau_{2}}^{q}}.
\]

\item[(iii)] For $u\in W_{-\tau}^{k,p}$, if $n-kp>0$, then $u\in L_{-\tau}^{\frac{np}{n-kp}}$ and
\[
\|u\|_{L_{-\tau}^{\frac{np}{n-kp}}} \leq C\|u\|_{W_{-\tau}^{k,p}}.
\]
If $n-kp<0$, then $u\in L_{-\tau}^{\infty}$ and
\[
\|u\|_{L_{-\tau}^{\infty}} \leq C\|u\|_{W_{-\tau}^{k,p}}.
\]
\end{enumerate}
\end{thm}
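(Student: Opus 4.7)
The strategy I would pursue rests on a single technical device: a dyadic annular decomposition combined with scaling, which converts weighted estimates on $\mathbb{R}^n\setminus B_1$ into unweighted estimates on a fixed reference annulus. Concretely, let $A_k=\{x:2^k\leq |x|< 2^{k+1}\}$ for $k\geq 0$, so that $\mathbb{R}^n\setminus B_1=\bigsqcup_{k\geq 0} A_k$, and on $A_k$ the weights satisfy $|x|\sim 2^k$ and the measure $|x|^{-n}\,\mathrm dx$ is comparable to the rescaled Lebesgue measure under $y=2^{-k}x$, which maps $A_k$ to the fixed annulus $A_0=\{1\leq |y|\leq 2\}$. The weighted norm then becomes, up to constants, $\|u\|_{L^p_{-\tau}(A_k)}^p\sim 2^{k\tau p}\|\tilde u\|_{L^p(A_0)}^p$ where $\tilde u(y)=u(2^k y)$, and an analogous identity holds for $W^{k,p}_{-\tau}$ norms after noting that $\partial^I u$ picks up a factor $2^{-k|I|}$ which is precisely compensated by the $|x|^{|I|}$ in the definition.

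For (i), on each annulus $A_k$ I would apply ordinary Hölder between $L^p(A_k)$ and $L^q(A_k)$ (with $q\geq p$), then multiply and divide by the appropriate power of $|x|^{\tau_2}$ to obtain
\begin{equation*}
\|u\|_{L^p_{-\tau_1}(A_k)}\leq C\,2^{-k(\tau_2-\tau_1)}\|u\|_{L^q_{-\tau_2}(A_k)}.
\end{equation*}
The hypothesis $\tau_1<\tau_2$ makes the geometric series $\sum_k 2^{-k(\tau_2-\tau_1)p}$ convergent, so summing in $k$ yields the desired inequality (one also needs to invoke the embedding $\ell^q\hookrightarrow\ell^p$ for the sums, valid since $p\leq q$). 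For (ii), I would apply Hölder pointwise on each annulus after splitting the weight $|x|^{-\tau}=|x|^{-\tau_1}|x|^{-\tau_2}$, then use the identity $\frac{1}{r}=\frac{1}{p}+\frac{1}{q}$ to combine the three integrals; summing over annuli again gives the bound.

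For (iii), the key case, I would rescale to the fixed reference annulus $A_0$ and invoke the standard Sobolev embedding $W^{k,p}(A_0)\hookrightarrow L^{np/(n-kp)}(A_0)$ (when $n>kp$) or $\hookrightarrow L^\infty(A_0)$ (when $n<kp$) on each piece. Tracking how the scaling interacts with the weights gives
\begin{equation*}
\|u\|_{L^{np/(n-kp)}_{-\tau}(A_j)}\leq C\,\|u\|_{W^{k,p}_{-\tau}(\tilde A_j)},
\end{equation*}
where $\tilde A_j$ is a slightly enlarged annulus needed for the local Sobolev embedding (one may need $A_{j-1}\cup A_j\cup A_{j+1}$ because of derivatives). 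The point is that the exponents of $|x|$ appearing in both sides match by construction of the weighted norms, so no decay in $k$ is needed; one just sums using finite overlap of the $\tilde A_j$'s. The $L^\infty$ version follows the same way.

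The main obstacle, and the reason dyadic annuli are essential, is the interaction between differentiation and the weight in (iii): a naive global Sobolev embedding on the non-compact space $\mathbb R^n\setminus B_1$ would lose information about decay at infinity, while the local Sobolev embedding on a fixed annulus is scale-invariant in precisely the sense that matches the weighted norm definition $\|u\|_{W^{k,p}_{-\tau}}$ with its extra $|x|^{|I|}$ factors. Once this scaling consistency is recognized, the only remaining care is bookkeeping the constants so they depend only on $n,k,p,\tau$ and not on $j$, and handling the transition between the compact core region (where standard Sobolev embedding applies directly) and the asymptotic end.
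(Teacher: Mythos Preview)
The paper does not supply its own proof of this theorem; it is stated in the appendix with attribution to Bartnik and then used freely throughout. Your dyadic-annulus-plus-rescaling strategy is exactly the standard argument (and is essentially Bartnik's original proof), so in that sense your proposal agrees with what the paper implicitly relies on.

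Two small remarks. First, in part (i) your parenthetical ``invoke the embedding $\ell^q\hookrightarrow\ell^p$'' has the inclusion the wrong way around for sequence spaces; what you actually need after obtaining $\|u\|_{L^p_{-\tau_1}(A_k)}\leq C\,2^{-k(\tau_2-\tau_1)}\|u\|_{L^q_{-\tau_2}(A_k)}$ is a discrete H\"older inequality on the sum $\sum_k 2^{-kp(\tau_2-\tau_1)} a_k^p$ with exponents $q/p$ and its conjugate, using the convergence of the geometric series you already noted. Second, in part (iii) the enlarged annuli are unnecessary: the Sobolev embedding $W^{k,p}(A_0)\hookrightarrow L^{np/(n-kp)}(A_0)$ (or into $L^\infty$) holds on the closed annulus $A_0$ itself since it is a bounded Lipschitz domain, so no overlap bookkeeping is required. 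With these cosmetic fixes your argument is complete.
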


\subsection{Weighted analysis on
AF weighted manifold}In the following, we take $(M,g,f)$ to be an AF weighted manifold of $(p,\tau)$-type.  Below we collect some basic theorems for weighted analysis.

\begin{thm}\label{elliptic theory}
Assume that $X$ is a smooth vector field in $L^{\infty}_{-\tau-1}$ and that $c$ is a smooth function in $L^\infty_{-\tau-2}$. For any $q>1$ and $\sigma\in\mathbb{R}$, if $u\in L^q_{-\sigma}$ satisfies the equation
$$
\Delta_g u+\langle\mathrm du,X\rangle_g+cu=h \mbox{ with }h\in L^{q}_{-\sigma-2}
$$
in the distributional sense with respect to $\mathrm{dvol}_{g}$, i.e.
$$
\int_{M}\left(\Delta_g\vp-\langle\mathrm d\vp,X\rangle_g-\vp\,\Div_g X+c\vp\right)u\,\mathrm{dvol}_{g} =\int_M \vp h\,\mathrm{dvol}_{g}
$$
holds for all $\vp\in C^\infty_0(M)$,
then $u$ is in $W^{2,q}_{-\sigma}$ and we have
\begin{equation}\label{Eq: Lq estimate}
\|u\|_{W^{2,q}_{-\sigma}}\leq C\left(\|h\|_{L^q_{-\sigma-2}}+\|u\|_{L^{q}_{-\sigma}}\right)
\end{equation}
for some constant $C$ depending only on $\|g-g_{\mathbb{E}}\|_{L_{-\tau}^{\infty}}$, $\|\de g\|_{L_{-\tau-1}^{\infty}}$, $\|X\|_{L_{-\tau-1}^{\infty}}$, $\|c\|_{L_{-\tau-2}^{\infty}}$, $M$, $q$, $\sigma$ and $\tau$.
\end{thm}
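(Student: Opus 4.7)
The plan is to reduce the weighted $W^{2,q}$ estimate on each end to a uniform-in-scale interior $L^q$ estimate on a fixed annulus, via a dyadic decomposition and scaling argument, and to handle the compact core by classical interior regularity together with a smooth partition of unity.

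On each end $E_k$ I would cover $\{|x|\geq 1\}$ by dyadic annuli $A_R=\{R\leq |x|\leq 2R\}$ for $R=2^j$, $j\geq 0$, and for each such $R$ pass to rescaled coordinates $y=x/R$ on a slight enlargement $A_1'\supset A_1=\{1\leq|y|\leq 2\}$. Setting
$$\tilde u(y)=u(Ry),\ \tilde g_{ij}(y)=g_{ij}(Ry),\ \tilde X^i(y)=R\,X^i(Ry),\ \tilde c(y)=R^2 c(Ry),\ \tilde h(y)=R^2 h(Ry),$$
a direct calculation shows that $\tilde u$ solves, in the distributional sense on $A_1'$, the equation $\Delta_{\tilde g}\tilde u+\langle d\tilde u,\tilde X\rangle_{\tilde g}+\tilde c\,\tilde u=\tilde h$. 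Using the assumed bounds on $g-g_{\mathbb E}$, $\partial g$, $X$ and $c$, one checks immediately that
$$\|\tilde g-g_{\mathbb E}\|_{L^\infty(A_1')}+\|\partial\tilde g\|_{L^\infty(A_1')}+\|\tilde X\|_{L^\infty(A_1')}+\|\tilde c\|_{L^\infty(A_1')}\ \leq\ C\,R^{-\tau},$$
where $C$ depends only on the norms listed in the theorem. In particular the rescaled operator has $C$-uniformly bounded, uniformly elliptic coefficients on $A_1'$, independent of $R$.

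I would then apply the standard interior $L^q$ estimate (Calderón–Zygmund / Agmon–Douglis–Nirenberg) on the pair $A_1\Subset A_1'$ to get
$$\|\tilde u\|_{W^{2,q}(A_1)}\ \leq\ C\bigl(\|\tilde h\|_{L^q(A_1')}+\|\tilde u\|_{L^q(A_1')}\bigr)$$
with $C$ independent of $R$. Changing variables $x=Ry$ with the weight $|x|^{-n}\,dx$ gives the scaling identities $\|u\|_{L^q_{-\sigma}(A_R)}\sim R^\sigma\|\tilde u\|_{L^q(A_1)}$, $\|D^k u\|_{L^q_{-\sigma-k}(A_R)}\sim R^\sigma\|D^k\tilde u\|_{L^q(A_1)}$, and $\|h\|_{L^q_{-\sigma-2}(A_R)}\sim R^\sigma\|\tilde h\|_{L^q(A_1)}$, so that the $R^2$ factor absorbed into $\tilde h$ is precisely cancelled by the weight shift from $-\sigma$ to $-\sigma-2$. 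Raising to the $q$-th power, summing over $R=2^j$ (with bounded overlap of the enlarged annuli), and combining with classical interior $L^q$ regularity on finitely many coordinate charts covering $K$, one arrives at the global estimate \eqref{Eq: Lq estimate}.

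The main technical hurdle is simply the weight book-keeping: verifying that every coefficient of the rescaled operator is bounded uniformly in $R$ (indeed decays like $R^{-\tau}$), and that the $L^q$ norms of $u$, $Du$, $D^2 u$, and $h$ all rescale with the same power $R^\sigma$. A secondary routine point is passing from the global distributional identity tested against $\varphi\in C^\infty_0(M)$ to a local identity on $A_1'$, which is achieved by inserting a smooth cutoff; the resulting commutator terms are of lower order and are absorbed into $\tilde h$ through the $L^q_{-\sigma}$ bound on $u$ by standard product estimates.
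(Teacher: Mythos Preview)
Your dyadic--rescaling argument on each end is exactly what the paper does (it is packaged there as a separate lemma): rescale to a fixed annulus, observe that the rescaled coefficients are uniformly elliptic and uniformly bounded (in fact $O(R^{-\tau})$ close to Euclidean), apply the interior $L^q$ estimate \`a la Gilbarg--Trudinger Theorem~9.11, undo the scaling to turn the unweighted norms into weighted ones with the correct powers, and sum over $R=2^j$. The interior part on the compact core is likewise the same. So the \emph{estimate} portion of your proposal is correct and matches the paper.

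The gap is the regularity step. You start only from $u\in L^q_{-\sigma}$ solving the equation in the distributional sense, and you invoke the ``standard interior $L^q$ estimate'' on $A_1\Subset A_1'$. That estimate is an \emph{a priori} estimate: it presupposes $\tilde u\in W^{2,q}_{\mathrm{loc}}$. You have not explained how to upgrade $u$ from $L^q$ to $W^{2,q}_{\mathrm{loc}}$. The paper addresses this explicitly before running the scaling argument: on any ball $B$ it solves the Dirichlet problem $\Delta_g v+\langle dv,X\rangle_g=h-cu$ with $v\in W^{2,q}(B)$, sets $w=u-v$, checks that $w$ is a distributional solution of a homogeneous second-order equation, and then applies a hypoellipticity/regularity result for distributional solutions (Folland, Theorem~6.33) to conclude $w\in C^\infty_{\mathrm{loc}}$, hence $u=w+v\in W^{2,q}_{\mathrm{loc}}$. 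Only after this is the weighted estimate derived. You should insert an analogous step; alternatively you may cite a Calder\'on--Zygmund regularity statement (not just an a priori estimate) valid for $L^q$ distributional solutions with continuous leading coefficients, but this needs to be stated and justified, not assumed.

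A smaller point: your closing remark about ``inserting a smooth cutoff'' and absorbing ``commutator terms'' into $\tilde h$ via the $L^q_{-\sigma}$ bound on $u$ is both unnecessary and, as written, doesn't work. If the cutoff is applied to the test function, no commutator arises at all (any $\varphi\in C^\infty_0(A_R')$ is already in $C^\infty_0(M)$, so the local identity is immediate). If instead you meant to apply the cutoff to $u$, the commutator contains a first-order term $\nabla\chi\cdot\nabla u$, which you cannot absorb using only $\|u\|_{L^q_{-\sigma}}$ before you know $u\in W^{1,q}_{\mathrm{loc}}$.
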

\begin{proof}
First we show $u\in W^{2,q}_{\mathrm{loc}}$. It suffices to show that $u\in W_{\mathrm{loc}}^{2,q}(B)$ for any ball $B\subset M$. By \cite[Theorem 9.15]{GilbargTrudinger2001} and $h-cu\in L^{q}(B)$, there exists $v\in W^{2,q}(B)$ solving the Dirichlet problem
\[
\begin{cases}
\ \Delta_g v+\langle\mathrm dv,X\rangle_g = f-cu & \mbox{in $B$,} \\[0.5mm]
\ v = 0 & \mbox{on $\de B$.}
\end{cases}
\]
Write $w=u-v$. Then for all $\vp\in C^\infty_0(B)$,
$$
\int_{M}\left(\Delta_g\vp-\langle\mathrm d\vp,X\rangle_g-\vp\,\Div_g X\right)w\,\mathrm{dvol}_g = 0.
$$
Since $\mathrm{dvol}_{g}=\sqrt{\det g}\,\mathrm{d}x$, then
$$
\int_{M}\left(\sqrt{\det g}\,\Delta_g\vp-\langle\mathrm d\vp,\sqrt{\det g}\,X\rangle_g-\vp\sqrt{\det g}\,\Div_g X\right)w\,\mathrm{d}x = 0.
$$
Define the elliptic operator $\hat{L}$ by
\[
\hat{L}\vp = \sqrt{\det g}\,\Delta_g\vp-\langle\mathrm d\vp,\sqrt{\det g}\,X\rangle_g+\vp\sqrt{\det g}\,\Div_g X,
\]
and let $\tilde{L}$ be the $L^2$-adjoint of $\hat{L}$ with respect to $\mathrm{d}x$. Then the above shows that $\tilde{L}w=0$ in $B$ in the distributional sense with respect to $\mathrm{d}x$. Using \cite[Theorem 6.33]{Folland1995}, we obtain $w \in H_{s}^{\mathrm{loc}}(B)$ for any $s$. It follows that $w\in C_{\mathrm{loc}}^{\infty}(B)$ and so $u=w+v\in W^{2,q}_{\mathrm{loc}}(B)$.

Now the desired estimate \eqref{Eq: Lq estimate} follows from the weighted $L^q$-estimate \cite[Proposition 1.6]{Bartnik1986} applied to each end $E_k$ and the $L^q$-estimate \cite[Theorem 9.11]{GilbargTrudinger2001} applied to each $U_i$.
We notice that the requirement $q\leq p$ in \cite[Proposition 1.6]{Bartnik1986} is unnecessary in our case and we include the details in Lemma \ref{weighted Lq} for completeness.
\end{proof}

\begin{lma}\label{weighted Lq}
Suppose that the operator
\[
L = a^{ij}\de_{i}\de_{j}+b^{i}\de_{i}+c
\]
satisfying
\begin{itemize}\setlength{\itemsep}{1mm}
\item $\Lambda^{-1}\delta_{ij}\leq a^{ij}\leq\Lambda\delta_{ij}$;
\item $\|a^{ij}\|_{W_{-\tau}^{1,\infty}(\mathbb R^n)}+\|b^{i}\|_{L_{-\tau-1}^{\infty}(\mathbb R^n)}+\|c\|_{L_{-\tau-2}^{\infty}(\mathbb R^n)}\leq\Lambda$
\end{itemize}
for some $\Lambda>1$. If $u\in L_{-\sigma}^{q}(\mathbb R^n)\cap W_{\mathrm{loc}}^{2,q}(\mathbb R^n)$ and $Lu\in L_{-\sigma-2}^{q}(\mathbb R^n)$ for some $q>1$ and $\sigma\in\mathbb{R}$, then $u\in W_{-\sigma}^{2,q}(\mathbb R^n)$ and
\begin{equation}\label{Lp estimate eqn 1}
\|u\|_{W_{-\sigma}^{2,q}(\mathbb R^n)} \leq C\|Lu\|_{L_{-\sigma-2}^{q}(\mathbb R^n)}+C\|u\|_{L_{-\sigma}^{q}(\mathbb R^n)},
\end{equation}
where $C$ is a constant depeding only on $n$, $q$, $\tau$, $\sigma$ and $\Lambda$.
\end{lma}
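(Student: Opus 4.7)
The plan is to combine a dyadic scaling argument with the classical interior $L^q$ estimate of Gilbarg-Trudinger \cite[Theorem 9.11]{GilbargTrudinger2001}, then sum the resulting rescaled estimates over annular regions to recover the weighted bound. First, I would decompose $\{|x|\geq 1\}$ into the dyadic annuli $A_k=\{2^{k-1}\leq |x|\leq 2^k\}$ for $k\geq 1$, together with their enlargements $\ti A_k=\{2^{k-2}\leq |x|\leq 2^{k+1}\}$, which have uniformly bounded overlap. On each $A_k$ set $R=2^{k-1}$ and introduce the rescaled function $\ti u(y)=u(Ry)$ on $\{1\leq |y|\leq 2\}$, which satisfies
\[
\ti a^{ij}(y)\de^2_{y_iy_j}\ti u+R\,\ti b^i(y)\de_{y_i}\ti u+R^2\ti c(y)\ti u=R^2(Lu)(Ry),
\]
with $\ti a^{ij}(y)=a^{ij}(Ry)$, $\ti b^i(y)=b^i(Ry)$, $\ti c(y)=c(Ry)$. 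The pointwise bounds $|a^{ij}-\delta^{ij}|\leq\Lambda|x|^{-\tau}$, $|\de a^{ij}|\leq\Lambda|x|^{-\tau-1}$, $|b^i|\leq\Lambda|x|^{-\tau-1}$, $|c|\leq\Lambda|x|^{-\tau-2}$ translate into $R$-independent bounds on $\ti a^{ij}$, $\de_y\ti a^{ij}$, $R\ti b^i$, $R^2\ti c$ over $\{1/2\leq |y|\leq 4\}$, while the ellipticity constant is preserved.

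Second, I would apply \cite[Theorem 9.11]{GilbargTrudinger2001} on the interior annulus $\{1\leq |y|\leq 2\}$ relative to $\{1/2\leq |y|\leq 4\}$, obtaining
\[
\|\ti u\|_{W^{2,q}(\{1\leq |y|\leq 2\})}\leq C\lf(R^2\|(Lu)(R\,\cdot)\|_{L^q(\{1/2\leq |y|\leq 4\})}+\|\ti u\|_{L^q(\{1/2\leq |y|\leq 4\})}\ri),
\]
with $C$ independent of $R$ thanks to the uniform coefficient bounds. Using the scaling identities
\[
\|\ti u\|_{L^q}\approx R^{-\sigma}\|u\|_{L^q_{-\sigma}},\quad \|\de^j_y\ti u\|_{L^q}\approx R^{-\sigma}\|\de^j u\|_{L^q_{-\sigma-j}},\quad R^2\|(Lu)(R\,\cdot)\|_{L^q}\approx R^{-\sigma}\|Lu\|_{L^q_{-\sigma-2}}
\]
on the corresponding annuli, this converts into
\[
\|u\|^q_{W^{2,q}_{-\sigma}(A_k)}\leq C\lf(\|Lu\|^q_{L^q_{-\sigma-2}(\ti A_k)}+\|u\|^q_{L^q_{-\sigma}(\ti A_k)}\ri).
\]

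Third, summing the $q$-th powers over $k\geq 1$ and invoking the bounded overlap of $\{\ti A_k\}$ delivers the weighted estimate on $\{|x|\geq 1\}$, and a direct application of \cite[Theorem 9.11]{GilbargTrudinger2001} on the bounded region $\{|x|\leq 2\}$ (where weighted and standard Sobolev norms are equivalent) completes the proof of \eqref{Lp estimate eqn 1} and shows in particular $u\in W^{2,q}_{-\sigma}$. The main subtle point will be verifying the $R$-independent bounds on the rescaled coefficients; this is precisely where the pointwise weighted $L^\infty$ hypothesis, rather than the weighted $L^p$ hypothesis used in \cite[Proposition 1.6]{Bartnik1986}, is essential and is what removes the restriction $q\leq p$, since the coefficient-bound factors $R^{-\tau}$, $R^{-\tau-1}$, $R^{-\tau-2}$ enter as harmless multiplicative constants in $R\geq 1$ without invoking any Sobolev embedding. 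The remainder of the argument is bookkeeping of the scaling exponents and verifying the overlap count of the enlarged annuli.
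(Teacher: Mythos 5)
Your proposal is correct and follows essentially the same dyadic rescaling argument as the paper's proof: scale to a fixed annulus, apply the interior $L^q$ estimate from Gilbarg--Trudinger with coefficient bounds uniform in $R$ (which is exactly what the pointwise weighted $L^\infty$ hypothesis provides), convert back to weighted norms, sum over dyadic scales using bounded overlap, and treat the bounded region separately. The paper's annuli $A_{2R,4R}\subset A_{R,8R}$ differ from your $A_k\subset\ti A_k$ only in bookkeeping; the substance is identical.
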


\begin{proof}
Write $h=Lu$ and $A_{r_{1},r_{2}}=\{r_{1}<|x|<r_{2}\}$ and define
\[
a_{R}^{ij}(x) = a^{ij}(Rx), \ \
b_{R}^{i}(x) = Rb^{i}(Rx), \ \
c_{R}(x) = R^{2}c(Rx),
\]
\[
L_{R} = a_{R}^{ij}\de_{i}\de_{j}+b_{R}^{i}\de_{i}+c_{R}, \ \
u_{R}(x) = u(Rx), \ \
h_{R}(x) = R^{2}h(Rx).
\]
It follows that $L_{R}u_{R}=h_{R}$. Applying the $L^{q}$-estimate \cite[Theorem 9.11]{GilbargTrudinger2001} to $L_{R}u_{R}=h_{R}$ on $A_{2,4}\subset A_{1,8}$, we see that
\[
\|u_{R}\|_{W^{2,q}(A_{2,4})} \leq C\|h_{R}\|_{L^{q}(A_{1,8})}+C\|u_{R}\|_{L^{q}(A_{1,8})},
\]
which is equivalent to
\[
\int_{A_{2,4}}(|u_{R}|^{q}+|\de u_{R}|^{q}+|\de^{2}u_{R}|^{q})\,\mathrm{d}x \leq C\int_{A_{1,8}}(|h_{R}|^{q}+|u_{R}|^{q})\,\mathrm{d}x.
\]
Then we have
\[
\int_{A_{2R,4R}}\Big(|u|^{q}+(R|\de u|)^{q}+(R^{2}|\de^{2}u_{R}|)^{q}\Big)R^{-n}\,\mathrm{d}x \leq C\int_{A_{R,8R}}\Big((R^{2}|h|)^{q}+|u|^{q}\Big)R^{-n}\,\mathrm{d}x
\]
and so
\[
\begin{split}
\int_{A_{2R,4R}}\Big((R^{\sigma}|u|)^{q}+(R^{\sigma+1}|\de u|)^{q}&+(R^{\sigma+2}|\de^{2}u_{R}|)^{q}\Big)R^{-n}\,\mathrm{d}x\\
&
\leq C\int_{A_{R,8R}}\Big((R^{\sigma+2}|h|)^{q}+(R^{\sigma}|u|)^{q}\Big)R^{-n}\,\mathrm{d}x,
\end{split}
\]
which implies
\[
\|u\|_{W_{-\sigma}^{2,q}(A_{2R,4R})}^{q} \leq C\|Lu\|_{L_{-\sigma-2}^{q}(A_{R,8R})}^{q}+C\|u\|_{L_{-\sigma}^{q}(A_{R,8R})}^{q}.
\]
It is clear that
\begin{equation}\label{Lp estimate eqn 2}
\begin{split}
& \|u\|_{W_{-\sigma}^{2,q}(A_{2,\infty})}^{q} = \sum_{k=1}^{\infty}\|u\|_{W_{-\sigma}^{2,p}(A_{2^{k},2^{k+1}})}^{q} \\
\leq {} & C\sum_{k=1}^{\infty}\|Lu\|_{L_{-\sigma-2}^{q}(A_{2^{k-1},2^{k+1}})}^{q}+C\sum_{k=1}^{\infty}\|u\|_{L_{-\sigma}^{q}(A_{2^{k-1},2^{k+1}})}^{q} \\[2mm]
\leq {} & C\|Lu\|_{L_{-\sigma-2}^{q}(A_{1,\infty})}^{q}+C\|u\|_{L_{-\sigma}^{q}(A_{1,\infty})}^{q}.
\end{split}
\end{equation}
Applying the $L^{q}$-estimate \cite[Theorem 9.11]{GilbargTrudinger2001} again,
\begin{equation}\label{Lp estimate eqn 3}
\|u\|_{W_{-\tau}^{2,q}(B_{4})}^{q}
\leq C\|Lu\|_{L_{-\sigma-2}^{q}(B_{8})}^{q}+C\|u\|_{L_{-\sigma}^{q}(B_{8})}^{q}.
\end{equation}
Then \eqref{Lp estimate eqn 1} follows from \eqref{Lp estimate eqn 2} and \eqref{Lp estimate eqn 3}.
\end{proof}

\begin{thm}\label{Fredholm}
Assume that $X$ is a smooth vector field in $L^{\infty}_{-\tau-1}$ with $\Div_{g}X\in L^{\infty}_{-\tau-2}$ and that $c$ is a smooth function in $L^\infty_{-\tau-2}$. Then the operator
$$
L:W^{2,q}_{-\tau}\to L^q_{-\tau-2},\ u\mapsto  \Delta_g u+\langle\mathrm du,X\rangle_g+cu,
$$
is Fredholm.
\end{thm}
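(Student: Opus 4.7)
The plan is to write $L = \Delta_g + K$ where $Ku := \langle \mathrm du, X\rangle_g + cu$, verify that $\Delta_g : W^{2,q}_{-\tau} \to L^q_{-\tau-2}$ is Fredholm, verify that $K$ is a compact operator between the same spaces, and conclude from stability of the Fredholm property under compact perturbations. The Fredholm property of $\Delta_g$ is already built into the framework: by \cite[Proposition 2.2]{Bartnik1986}, the flat Laplacian $\Delta_{\mathbb{E}}$ on $\mathbb R^n$ is Fredholm between $W^{2,q}_{-\tau}$ and $L^q_{-\tau-2}$ for $\tau$ in the non-indicial range $(\tfrac{n-2}{2}, n-2)$; combined with the weighted elliptic estimate of Theorem \ref{elliptic theory} and a cutoff argument on each end, the Fredholm property transfers to $\Delta_g$ on $M$, since the difference $\Delta_g - \Delta_{\mathbb{E}}$ has coefficients controlled by $g-g_{\mathbb{E}}$ and $\partial g$, both of which decay in weighted norms by the definition of AF of $(p,\tau)$-type.

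The compactness of $K: W^{2,q}_{-\tau} \to L^q_{-\tau-2}$ is the core computation. From $u \in W^{2,q}_{-\tau}$ we obtain $|\mathrm du| \in L^q_{-\tau-1}$ and $|u| \in L^q_{-\tau}$, so pairing against $X \in L^\infty_{-\tau-1}$ and $c \in L^\infty_{-\tau-2}$ yields $Ku \in L^q_{-2\tau-2}$, with a gain of $|x|^{-\tau}$ over the target weight. Given any bounded sequence $\{u_k\}$ in $W^{2,q}_{-\tau}$, Rellich's compactness theorem applied on each fixed ball $B_R \subset M$ yields a subsequence with $(u_k, \mathrm du_k)$ converging in $L^q(B_R)$, hence $\{Ku_k\}$ converges in $L^q_{-\tau-2}(B_R)$; the tails are uniformly small thanks to
\begin{equation*}
\|Ku_k\|_{L^q_{-\tau-2}(\{|x|>R\})} \leq R^{-\tau}\,\|Ku_k\|_{L^q_{-2\tau-2}} \leq CR^{-\tau},
\end{equation*}
and a standard diagonal argument across $R \to \infty$ produces a subsequence convergent in $L^q_{-\tau-2}$.

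The main obstacle will be the Fredholm property of $\Delta_g$ in step one: while Bartnik's theorem settles the model case on $\mathbb{R}^n$, the transfer to the curved metric $g$ requires a careful gluing of the interior $L^q$-estimate (as in Lemma \ref{weighted Lq}) with the weighted estimate at infinity, and verification that the perturbation $\Delta_g - \Delta_{\mathbb{E}}$ is relatively compact as a map $W^{2,q}_{-\tau} \to L^q_{-\tau-2}$. Once this is in hand, the Fredholm property of $L$ is immediate, since $K$ is compact and compact perturbations preserve both the Fredholm property and its index.
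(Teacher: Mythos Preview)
Your overall strategy—write $L=\Delta_g+K$ with $Ku=\langle\mathrm du,X\rangle_g+cu$, show $K$ compact, and invoke stability of the Fredholm property under compact perturbations—is legitimate and different from the paper's route. The paper never isolates $\Delta_g$: it establishes the semi-Fredholm estimate $\|u\|_{W^{2,q}_{-\tau}}\le C\|Lu\|_{L^q_{-\tau-2}}+C\|u\|_{L^q(\Omega)}$ for $L$ directly (using smallness of $L-\Delta_{\mathbb E}$ outside a large ball together with a cutoff and the interior $L^q$-estimate), from which finite-dimensional kernel and closed range follow via Rellich; finite-dimensional cokernel is then obtained by running the same estimate for the formal adjoint $L^*$, which is where the hypothesis $\Div_gX\in L^\infty_{-\tau-2}$ actually enters. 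Your decomposition packages the lower-order terms more cleanly and gives $\mathrm{ind}\,L=\mathrm{ind}\,\Delta_g$ for free.

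Your compactness argument for $K$ is correct: the extra decay from $X$ and $c$ makes the tails uniformly small in $L^q_{-\tau-2}$, and Rellich on balls handles the interior since $K$ is of order one.

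There is, however, a genuine slip in how you propose to handle $\Delta_g$. In the last paragraph you say the needed ingredient is that $\Delta_g-\Delta_{\mathbb E}$ is \emph{relatively compact} as a map $W^{2,q}_{-\tau}\to L^q_{-\tau-2}$; this is false. The principal part $(g^{ij}-\delta^{ij})\partial_i\partial_j$ hits the top derivatives of $u$, and on any ball where $g\neq g_{\mathbb E}$ a sequence with $\partial^2u_k$ weakly but not strongly convergent in $L^q$ produces no convergent subsequence for $(g^{ij}-\delta^{ij})\partial_i\partial_j u_k$—the decay of $g-g_{\mathbb E}$ is irrelevant on compact sets. What that decay actually buys you (and what you correctly say in your first paragraph) is that $\Delta_g-\Delta_{\mathbb E}$ has \emph{small operator norm on $\{|x|>R\}$} for $R$ large. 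This is exactly the mechanism the paper exploits to get its Step~1 estimate. If you use smallness-at-infinity (rather than compactness) together with a cutoff and the interior estimate, you do obtain the semi-Fredholm estimate and hence the Fredholm property for $\Delta_g$; your compact-perturbation argument then finishes the proof.
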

\begin{proof}
Let us split the argument into the following four steps.

\bigskip
\noindent
{\bf Step 1.} $\|u\|_{W_{-\tau}^{2,q}} \leq C\|Lu\|_{L_{-\tau-2}^{q}}+C\|u\|_{L^{q}(\Omega)}$ for any $u\in W_{-\tau}^{2,q}$, where $\Omega\subset M$ is a bounded subset and $C$ is a constant depending only on $\|g-g_{\mathbb{E}}\|_{L_{-\tau}^{\infty}}$, $\|\de g\|_{L_{-\tau-1}^{\infty}}$, $\|X\|_{L_{-\tau-1}^{\infty}}$, $\|c\|_{L_{-\tau-2}^{\infty}}$, $M$, $q$ and $\tau$.

\bigskip

Denote the geodesic ball in $M$ by $B_{R}$ and let $\eta_{R}$ be a smooth cut-off function on $M$ such that
\[
\text{$\eta_{R} \equiv 1$ in $B_{R}$ and $\eta_{R} \equiv 0$ in $M\setminus B_{2R}$},
\]
where $R$ is a sufficiently large constant to be determined later. Write $u_{R}=\eta_{R}u$. Applying \cite[(1.22)]{Bartnik1986} to $(u-u_{R})$ in each end $E_{k}$, we have
\begin{equation}\label{Fredholm eqn 5}
\begin{split}
& \|u-u_{R}\|_{W_{-\tau}^{2,q}(M\setminus B_{R})}
\leq C\|\Delta_{\mathbb{E}}(u-u_{R})\|_{L_{-\tau-2}^{q}(M\setminus B_{R})} \\[1mm]
\leq {} & C\|L(u-u_{R})\|_{L_{-\tau-2}^{q}(M\setminus B_{R})}+C\|(L-\Delta_{\mathbb{E}})(u-u_{R})\|_{L_{-\tau-2}^{q}(M\setminus B_{R})}.
\end{split}
\end{equation}
For the second term, since $g-g_{\mathbb{E}}\in L_{-\tau}^{\infty}$, $\de g\in L_{-\tau-1}^{\infty}$, $X\in L_{-\tau-1}^{\infty}$ and $c\in L_{-\tau-2}^{\infty}$, then
\begin{equation*}
\|(L-\Delta_{\mathbb{E}})(u-u_{R})\|_{L_{-\tau-2}^{q}(M\setminus B_{R})}
\leq CR^{-\tau}\|u-u_{R}\|_{W_{-\tau}^{2,q}(M\setminus B_{R})}.
\end{equation*}
Substituting this into \eqref{Fredholm eqn 5} and choosing $R$ sufficiently large (this fixes the value of $R$),
\begin{equation}\label{Fredholm eqn 6}
\|u-u_{R}\|_{W_{-\tau}^{2,q}(M\setminus B_{R})} \leq C\|L(u-u_{R})\|_{L_{-\tau-2}^{q}(M\setminus B_{R})}
\end{equation}
Using $u-u_{R}\equiv0$ in $B_{R}$, \eqref{Fredholm eqn 6} becomes
\[
\|u-u_{R}\|_{W_{-\tau}^{2,q}} \leq C\|L(u-u_{R})\|_{L_{-\tau-2}^{q}}
\]
and so
\begin{equation}\label{Fredholm eqn 7}
\begin{split}
\|u\|_{W_{-\tau}^{2,q}} \leq {} & C\|Lu\|_{L_{-\tau-2}^{q}}+C\|Lu_{R}\|_{L_{-\tau-2}^{q}}+\|u_{R}\|_{W_{-\tau}^{2,q}} \\
\leq {} & C\|Lu\|_{L_{-\tau-2}^{q}}+C\|L(\eta_{R}u)\|_{L_{-\tau-2}^{q}(B_{2R})}+C\|\eta_{R}u\|_{W_{-\tau}^{2,q}(B_{2R})} \\
\leq {} & C\|Lu\|_{L_{-\tau-2}^{q}}+C\|u\|_{W^{2,q}(B_{2R})}.
\end{split}
\end{equation}
Applying the $L^{q}$-estimate \cite[Theorem 9.11]{GilbargTrudinger2001} to $u$ on $B_{2R}$, we obtain
\begin{equation}\label{Fredholm eqn 8}
\|u\|_{W^{2,q}(B_{2R})} \leq \|Lu\|_{L^{q}(B_{4R})}+\|u\|_{L^{q}(B_{4R})}
\leq C\|Lu\|_{L_{-\tau-2}^{q}}+\|u\|_{L^{q}(B_{4R})}.
\end{equation}
Combining \eqref{Fredholm eqn 7} and \eqref{Fredholm eqn 8}, we obtain the required inequality for $\Omega=B_{4R}$.

\bigskip
\noindent
{\bf Step 2.} $\mathrm{Ker}L$ is finite dimensional.
\bigskip

Since $L$ is continuous, then $\mathrm{Ker}L$ is closed. To prove Step 2, it suffices to show that the set
\[
S = \{u\in W_{-\tau}^{2,p}:\text{$Lu=0$ and $\|u\|_{W_{-\tau}^{2,p}}=1$}\}
\]
is compact, or equivalently, every sequence $\{u_{k}\}_{k=1}^{\infty}\subset S$ has a Cauchy subsequence. By the Sobolev embedding, there exists a subsequence $\{u_{k_{i}}\}_{i=1}^{\infty}$ such that
\begin{equation}\label{Fredholm eqn 1}
\|u_{k_{i}}-u_{k_{j}}\|_{L^{q}(\Omega)} \rightarrow 0 \ \text{as $i,j\rightarrow\infty$}.
\end{equation}
Applying Step 1 to $(u_{k_{i}}-u_{k_{j}})$ and using \eqref{Fredholm eqn 1}, $\{u_{k_{i}}\}_{i=1}^{\infty}$ is a Cauchy sequence in $W_{-\tau}^{2,q}$.

\bigskip
\noindent
{\bf Step 3.} The range of $L$ is closed.
\bigskip

Since $\mathrm{Ker}L$ is finite dimensional, then there exists a closed subspace $Z$ such that $W_{-\tau}^{2,q}=\mathrm{Ker}L+Z$ and $\mathrm{Ker}L\cap Z=\{0\}$. We claim that
\begin{equation}\label{Fredholm claim}
\|u\|_{W_{-\tau}^{2,q}} \leq C\|Lu\|_{L_{-\tau-2}^{q}} \ \text{for $u\in Z$}.
\end{equation}
This can be proved by the contradiction argument. If \eqref{Fredholm claim} does not hold, then there exists a sequence $\{u_{k}\}_{k=1}^{\infty}\subset Z$ is a sequence such that
\begin{equation}\label{Fredholm eqn 2}
\|u_{k}\|_{W_{-\tau}^{2,q}} = 1 \ \text{and} \ \|Lu_{k}\|_{L_{-\tau-2}^{q}}\rightarrow0 \ \text{as $k\rightarrow\infty$}.
\end{equation}
Applying the similar argument of Step 2, there exists a Cauchy subsequence $\{u_{k_{i}}\}_{i=1}^{\infty}\subset Z$ in $W_{-\tau}^{2,q}$. Denote its limit by $u_{\infty}$. Then \eqref{Fredholm eqn 2} shows $\|u_{\infty}\|_{W_{-\tau}^{2,q}}=1$ and $u_{\infty}\in\mathrm{Ker}L\cap Z$, which contradicts with $\mathrm{Ker}L\cap Z=\{0\}$.

Next we prove $\mathrm{Ran}L$ is closed. Let $h_{\infty}$ be a limit point of $\mathrm{Ran}L$. Then there exist two sequences $\{v_{i}\}_{i=1}^{\infty}\subset W_{-\tau}^{2,q}$ and $\{h_{i}\}_{i=1}^{\infty}\subset L_{-\tau-2}^{q}$ are two sequences such that
\begin{equation}\label{Fredholm eqn 3}
Lv_{i} = h_{i} \ \text{and} \ h_{i}\rightarrow h_{\infty} \ \text{as $i\rightarrow\infty$}.
\end{equation}
It suffices to show that $h_{\infty}\in\mathrm{Ran}L$. Since $W_{-\tau}^{2,q}=\mathrm{Ker}L+Z$, there exists $w_{i}\in Z$ such that
\begin{equation}\label{Fredholm eqn 4}
Lw_{i} = Lv_{i} = h_{i}.
\end{equation}
Applying \eqref{Fredholm claim} to $(w_{i}-w_{j})$, we see that $\{w_{i}\}$ is a Cauchy sequence in $W_{-\tau}^{2,q}$. Denote its limit by $w_{\infty}$. Then \eqref{Fredholm eqn 3} and \eqref{Fredholm eqn 4} show $Lw_{\infty}=h_{\infty}$ and so $h_{\infty}\in\mathrm{Ran}L$.

\bigskip
\noindent
{\bf Step 4.} $\mathrm{Coker}L$ is finite dimensional.
\bigskip

We use $L^{*}:(L^q_{-\tau-2})^{*}\to(W^{2,q}_{-\tau})^{*}$ to denote the $L^2$-adjoint of $L$ with respect to $\mathrm{dvol}_{g}$. Using $(L^q_{-\tau-2})^{*}=L_{\tau+2-n}^{q^{*}}$ and Theorem \ref{elliptic theory}, we obtain $\mathrm{Ker}L^{*}\subset W_{\tau+2-n}^{2,q^{*}}$. Write $c^{*}=c-\Div_g X\in L_{-\tau-2}^{\infty}$. Direct calculation shows
\[
L^{*}u = \Delta_g u-\langle\mathrm du,X\rangle_g+c^{*}u \ \text{for $u\in W_{-\tau}^{2,q}$}.
\]
Repeating the argument of Step 1 and 2 for the operator $L^{*}$, we obtain $\mathrm{Ker}L^{*}$ is also finite dimensional. Then $(\mathrm{Coker}L)^{*}\cong\mathrm{Ker}L^{*}$ implies
\[
\dim\mathrm{Coker}L = \dim\mathrm{Ker}L^{*} < \infty.
\]

The proof of Theorem \ref{Fredholm} is now completed.
\end{proof}

\end{document}